\newcommand{\R}{{\mathbb R}}
\newcommand{\N}{{\mathbb N}}
\newcommand{\Lcal}{{\mathcal L}}
\newcommand{\Ucal}{{\mathcal U}}
\newcommand{\Mcal}{{\mathcal M}}
\newcommand{\Ncal}{{\mathcal N}}
\newcommand{\Vcal}{{\mathcal V}}
\newcommand{\fdot}{{\,\cdot\,}}
\newtheorem{theorem}{Theorem}
\newtheorem{corollary}[theorem]{Corollary}
\newtheorem{definition}[theorem]{Definition}
\newtheorem{lemma}[theorem]{Lemma}
\newtheorem{proposition}[theorem]{Proposition}
\newtheorem{remark}[theorem]{Remark}
\theoremstyle{definition}
\newtheorem{example}[theorem]{Example}
\numberwithin{equation}{section}
\numberwithin{theorem}{section}
\definecolor{darkgreen}{rgb}{0,0.7,0}
\newcommand{\iii}{{\vert\kern-0.25ex\vert\kern-0.25ex\vert}}
\begin{document}

\title{Deep neural networks, generic universal interpolation, \\and controlled ODEs\thanks{The authors thank the Associate Editor and two anonymous referees for their valuable comments. Christa Cuchiero gratefully acknowledges financial support by the Vienna Science and Technology Fund (WWTF) under grant MA16-021. Josef Teichmann gratefully acknowledge financial support by the Swiss National Science Foundation (SNF) under grant 179114.}}
\author{Christa Cuchiero\thanks{Department of Statistics and Operations Research, Data Science @ Uni Vienna, University of Vienna (christa.cuchiero@univie.ac.at).}
\and Martin Larsson\thanks{Department of Mathematical Sciences, Carnegie Mellon University (martinl@andrew.cmu.edu).}
\and Josef Teichmann\thanks{Department of Mathematics, ETH Zurich (jteichma@math.ethz.ch).}}


\maketitle

\begin{abstract}
  A recent paradigm views deep neural networks as discretizations of
  certain controlled ordinary differential equations, sometimes called neural ordinary differential equations. We make use of
  this perspective to link expressiveness of deep networks to the
  notion of controllability of dynamical systems. Using this
  connection, we study an expressiveness property that we call
  \emph{universal interpolation}, and show that it is generic in a
  certain sense. The universal interpolation property is slightly
  weaker than universal approximation, and disentangles supervised
  learning on finite training sets from generalization properties. We
  also show that universal interpolation holds for certain deep neural
  networks even if large numbers of parameters are left untrained, and
  are instead chosen randomly.  This lends theoretical support to the
  observation that training with random initialization can be
  successful even when most parameters are largely unchanged through
  the training. Our results also explore what a minimal amount of trainable parameters in neural ordinary differential equations could be without giving up on expressiveness.
\end{abstract}

\section{Deep neural networks as controlled ODEs}

Several recent studies of deep neural networks revolve around the idea
of viewing such networks as discretizations of ordinary differential
equations (ODEs). This led to the terminology \emph{neural ODEs}, a perspective which has successfully been applied to a
number problems; see e.g.\
\cite{e_17,chang_etal_17,che_etal_18,gra_che_bet_sut_duv_18,dup_dou_teh_19}
among many others. See also \cite{e_han_li_18,liu_mar_19} for
mathematically rigorous analyses. In this paper we make progress
towards a theoretical understanding of this success. Using ideas from
dynamical systems and control theory, we show why it can be beneficial
to view deep neural networks as discretized controlled ODEs. Our
analysis suggests that randomization of the vector fields can be used
to substantially reduce the number of trainable parameters. This sheds
new light on random initialization of deep neural networks with fully
trainable parameters.

The approach in \cite{e_17,che_etal_18,liu_mar_19} rests on the
observation that the input $X_k$ to any given layer $k$ is mapped to
an output $X_{k+1}$ that can be expressed as a residual network style
transition \citep{he_zhang_ren_sun_15} of the form
$X_{k+1} = X_k + V(X_k,\theta_k)$. The right-hand side depends both on
the input $X_k$ and on a parameter vector $\theta_k$, both of which
vary from layer to layer.

The representation of $X_{k+1}$ as a perturbation of $X_k$ suggests
that for sufficiently deep networks, the cumulative effect of repeated
transitions mimics the behavior of an ODE. This ODE can then be
studied instead of the original network. The discrete parameter
$k=0,1,2,\ldots$ that counts the layers is replaced by a continuous
parameter $t\in[0,1]$, and one lets the ``state'' $X_t$ at ``layer''
$t$ evolve according to a law of motion of the form
\begin{equation}\label{CODE_1}
  \frac{d}{dt}X_t = V(X_t,\theta_t).
\end{equation}
In other words, one views depth as the running time of a dynamical
system. The solution $X_t$ of \eqref{CODE_1} forms a curve through its
state space, which we here take to be $\R^m$ for some fixed dimension
$m$, and $\theta_t$ represents a curve through the space of possible
parameters. Given an initial condition $x\in\R^m$, we let $X^x_t$
denote the corresponding solution of \eqref{CODE_1}, subject to
\[
  X^x_0 = x.
\]
For all choices of $V(x,\theta)$ and $\theta_t$ considered in this
paper, the solution of \eqref{CODE_1} exists and is unique.  The
following example connects \eqref{CODE_1} with standard neural network
architectures.

\begin{example}\label{Ex_1}
  In a standard (residual) neural network layer, the components of
  $V(x,\theta)$ are of the form
  $V^j(x,\theta)=b^j + \sum_{k=1}^m a^j_k\sigma(x^k)$ for
  $j=1,\ldots,m$, where the parameters $a^j_k$, $b^j$ make up the
  vector $\theta$, and $\sigma(\fdot)$ is a fixed nonlinearity acting
  on the components of $x=(x^1,\ldots,x^m)$.  In this example,
  somewhat oddly, we let the nonlinearity act \emph{before} the affine
  map. However, the ordering is inessential when multiple layers are
  composed, because the nonlinearity takes the affine map from the
  previous layer as input. The choice made here will be convenient in
  later examples.
\end{example}

For an input $x\in\R^m$, the ``continuous-depth'' network
\eqref{CODE_1} outputs $X^x_1$. This is however still a vector in
$\R^m$, and will usually be mapped to a much lower dimensional output,
say $R(X^x_1)$ for some readout map $R\colon\R^m\to\R^{m'}$ with
$m'\ll m$. Supervised learning in this framework amounts to the
following: for a given training set of input/ouput pairs,
$(x_i,y_i)\in\R^m\times\R^{m'}$ for $i=1,\ldots,N$, identify
parameters $\theta_t$, $t\in[0,1]$, and a readout map $R$ such that
$R(X^{x_i}_1) \approx y_i$ for all $i$, perhaps while imposing a
regularization penalty on $\theta_t$. Our results are formulated for
$m'=m$ with either the identity readout, leading to $x\mapsto X^x_1$,
or the readout structure $x\mapsto \lambda(X^x_1-x)$ that depends
directly on the input data and a trained scalar parameter $\lambda>0$.

In the present paper we recognize \eqref{CODE_1} as a \emph{controlled
  ordinary differential equation} (CODE), and the training task as a
problem of optimal control. One of our key motivations is to show that it is actually not necessary to train all parameters. Only a minority needs to be trained. We capture this idea by
decomposing $V(x,\theta)$ in a way where the dependence on the trainable parameters enters linearly, which corresponds to the most natural and  simplest parametrization. 
Indeed, our results will be proved in the
following setting.
Suppose the function $V(x,\theta)$ determining the
right-hand side of \eqref{CODE_1} is of the form
\begin{equation}\label{u1V1udVd}
  V(x,\theta) = u^1V_1(x) + \cdots + u^dV_d(x),
\end{equation}
where $u^1,\ldots,u^d$ are scalar parameters, and $V_1,\ldots,V_d$ are
smooth vector fields on $\R^m$.\footnote{That is, the $V_i$ are smooth
  maps from $\R^m$ to $\R^m$.} We think of $u^1,\ldots,u^d$ as
trainable parameters (thus part of $\theta$) that will be
$t$-dependent. The vector fields $V_1,\ldots,V_d$ are specified by the
remaining parameters in $\theta$, which will be non-trainable and
constant in $t$. The following example illustrates that this decomposition is in line with the standard neural network
architecture of Example \ref{Ex_1}.

\begin{example}\label{Ex_2}
  Recall the standard architecture of Example \ref{Ex_1}, where each
  layer depends on $m+ m^2$ parameters.  If each vector field $V_i(x)$
  is of this form, then so is $V(x,\theta)$ in \eqref{u1V1udVd}. To
  see this, suppose
  $V_i^j(x)=b_i^j + \sum_{k=1}^m a^j_{ik}\sigma(x^k)$ for some
  parameters $b_i^j,a^j_{ik}$. Then
  $V^j(x,\theta)=b^j + \sum_{k=1}^m a^j_k\sigma(x^k)$ with
  $b^j=\sum_{i=1}^du^ib_i^j$ and $a^j_k=\sum_{i=1}^d u^i a^j_{ik}$,
  which again has the standard form in Example~\ref{Ex_1}. This
  construction should be viewed as one way of decomposing the full
  parameter set into trainable and non-trainable parameters. In fact,
  in this example, the number of trainable parameters per layer is
  $d$, which should be thought of as being much smaller than the
  number of non-trainable parameters $m+m^2$. A key message of our
  results is that similar reductions in the number of non-trainable
  parameters are possible in the CODE setting, without compromising
  expressive power.
\end{example}

With the specification \eqref{u1V1udVd}, the CODE \eqref{CODE_1} takes
the form
\begin{equation}\label{CODE_2}
  \frac{d}{dt}X_t = u^1_tV_1(X_t) + \cdots + u^d_tV_d(X_t),
\end{equation}
where $u^1_t,\ldots,u^d_t$ are the controls (the trainable
parameters). As before, if the initial condition is $x$, the solution
is denoted by $X^x_t$. The output is $X^x_1$, or if composed with a
readout, $R(X^x_1)$. If the controls are square-integrable functions
of $t$ and the vector fields are smooth and bounded (i.e.,
$\sup_{x\in\R^m}\|V_i(x)\| < \infty$ for all $i$), one has existence
and uniqueness of solutions of \eqref{CODE_2} for every initial
condition.

The system \eqref{CODE_2} turns out to be remarkably expressive if the
vector fields are chosen appropriately. Our goal in this paper is to
make this statement rigorous. In Section~\ref{S_gen_int} we establish
Theorem~\ref{T1}, which states that one can match any training set of
finite size using just $d=5$ suitably chosen vector fields
$V_1,\ldots,V_5$. That is, for any finite set of input/ouput pairs
$(x_i, y_i)\in\R^m\times\R^{m}$, there exist controls such that
$X^{x_i}_1=y_i$ for all $i$. We refer to this property as the
\emph{universal interpolation property}. This differs from the
well-studied notion of universal approximation (e.g.\
\cite{cyb_89,hor_91}), and makes no statement about generalization
properties. Let us stress that we do not claim that perfect
interpolation is necessarily a desirable training goal. Still, we
believe it serves as a useful measure of expressiveness. Moreover, recent work on the so-called \emph{double-descent phenomenon} has shown that even when machine learning models interpolate the training set, they can still generalize well on unseen data; see e.g.\ \cite{Ma2018ThePO,Belkin2018OverfittingOP,liang2018just}. For classical
results on interpolation via neural networks, e.g.~multilayer
feedforward perceptrons, we refer to \citet[Theorem
5.1]{pinkus_99}. In contrast to this classical theorem, our result
does not depend on the number of training samples that one aims to
match. Recently, universal approximation of neural ordinary differential
equations has been considered in \cite{ZGUA:2019}, where the authors
prove that certain homeomorphism on $\mathbb{R}^p$ can be embedded
into flows of controlled ordinary differential equations on
$\mathbb{R}^{2p}$. One essential difference to our results is the question of minimal controllability of the flows, which is not addressed in \cite{ZGUA:2019}. No-go
results have been shown in \cite{DDT:2019}. These results do not
contradict our findings as we only work with finite training data
sets.

The proofs of our results rely on mathematical machinery from control
theory, involving classical notions like Lie brackets and
controllability. This is reviewed in Section~\ref{S_Lie}. In addition
to laying the groundwork for the proofs, we aim to convey the
intuition for why control theory can help explain expressiveness in
deep learning. The formal proof of Theorem~\ref{T1} is then given in
Sections~\ref{S_proof_T1}, with some lengthier computations postponed
to the Appendix.

In Section~\ref{S_genexp} we go further by showing that not only are
five vector fields enough, they can be chosen randomly in the class of
real analytic vector fields. We make this precise in Theorem~\ref{T2}.
As a consequence, common structures such as the one in
Example~\ref{Ex_1} (with real analytic nonlinearities such as the standard functions $\arctan(x)$ or $\tanh(x)$) can be shown to
retain this strong form of expressiveness. This is done in
Corollary~\ref{C2}.

We do not make any statement about optimality of these generic
expressive networks for specific learning tasks. However, our analysis
produces the remarkable conclusion that deep neural networks,
expressed as discretizations of \eqref{CODE_2} with only five random
vector fields, can interpolate any functional relation with a
precision that depends only on depth and the amount of training
data. Our approach supports the ``folklore'' statement that randomness
is of great importance for training. Indeed, the role of randomness,
which is ubiquitous in training procedures (\emph{stochastic} gradient
descent, \emph{random} initialization of weights, etc.), receives a
theoretical basis through Theorem~\ref{T2}.  In Section
\ref{S_genexp}, we comment on these algorithmic aspects, although we
do not perform any empirical analysis in this paper.  The proof of
Theorem~\ref{T2} is given in Section~\ref{S_proof_T2}.

A full-fledged geometric and quantitative analysis in a very general
analytic setting is performed in the companion
paper~\cite{cuclartei:19}. There $ \mathbb{R}^m $ is replaced by a
so-called convenient vector space, covering various
infinite-dimensional situations of interest. We give a new proof of
the Chow--Rashevskii theorem, and present quantitative results on
training controlled ODEs. This lets us analyze controlled transport
equations or PDEs, as well as the effect of convolutional layers.

\section{Universal interpolation}\label{S_gen_int}

Interpreting \eqref{CODE_1} and \eqref{CODE_2} as CODEs establishes an
interface to control theory. This opens the door to powerful
mathematical techniques that we will deploy to establish an
expressiveness property that we call \emph{universal
  interpolation}. When satisfied, this property guarantees that any
supervised learning task has a solution. It is formalized in the
following definition, which uses the identity readout $R(x)=x$.

\begin{definition}
  The control system \eqref{CODE_2}, specified by the vector fields
  $V_1,\ldots,V_d$, is called a \emph{universal $N$-point
    interpolator} on a subset $\Omega\subseteq\R^m$ if, for any
  training set $\{(x_i,y_i)\in\Omega\times\Omega\colon i=1,\ldots,N\}$
  of size $N$, there exist controls $u^1_t,\ldots,u^d_t$ that achieve
  the exact matching $X^{x_i}_1=y_i$ for all $i=1,\ldots,N$. Here it
  is required that the training inputs $x_1,\ldots,x_N$, as well as
  the targets $y_1,\ldots,y_N$, are pairwise distinct.\footnote{A
    system like \eqref{CODE_2} can never map different inputs to the
    same output. Moreover, it is not meaningful to pair one single
    input with two different outputs in the training set.}
\end{definition}

Universal $N$-point interpolation may look like a rather strong
requirement, especially if the size $N$ of the training set and/or the
ambient dimension $m$ is large. Clearly, this property is primarily of
interest if the number $d$ of vector fields can be chosen small
compared to $N$ and $m$. Our first main result states that, in a
striking manner, this is always possible.

\begin{theorem}\label{T1}
  Fix $m\ge2$ and a bounded open connected subset
  $\Omega\subset\R^m$. There exist $d=5$ smooth bounded vector fields
  $V_1,\ldots,V_5$ on $\R^m$ such that \eqref{CODE_2} is a universal
  $N$-point interpolator in $\Omega$, for every $N$.
\end{theorem}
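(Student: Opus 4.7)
The plan is to reduce universal $N$-point interpolation to a classical controllability problem on a configuration space, and then invoke the Chow--Rashevskii theorem recalled in Section~\ref{S_Lie}. For distinct inputs $x_1,\dots,x_N$ and distinct targets $y_1,\dots,y_N$ in $\Omega$, consider the open connected manifold $\Ocal_N = \Omega^N \setminus \Delta_N$, where $\Delta_N$ is the fat diagonal of tuples with at least two coinciding coordinates (connectedness here uses $m\ge 2$). The CODE \eqref{CODE_2} lifts to an ensemble system on $\Ocal_N$ driven by the product vector fields $\widetilde V_i(z_1,\dots,z_N) = (V_i(z_1),\dots,V_i(z_N))$, and uniqueness of solutions keeps $\Ocal_N$ flow-invariant. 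Universal $N$-point interpolation is then precisely the statement that $(x_1,\dots,x_N)$ can be steered to $(y_1,\dots,y_N)$ inside $\Ocal_N$ by the lifted control system.

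Next I would apply Chow--Rashevskii: it suffices to exhibit $V_1,\dots,V_5$ on $\R^m$ such that the Lie algebra generated by $\widetilde V_1,\dots,\widetilde V_5$ spans the full tangent space $\R^{mN}$ at every point of $\Ocal_N$. This is the technical heart of the theorem and the main obstacle, because of an inherent reducibility of product fields: since $[\widetilde V_i, \widetilde V_j]$ is again the product lift of $[V_i, V_j]$, and likewise for iterated brackets, the bracket-generating condition on $\Ocal_N$ reduces to surjectivity of the evaluation map $L \to (\R^m)^N$, $W \mapsto (W(z_1),\dots,W(z_N))$, where $L$ is the Lie algebra generated by $V_1,\dots,V_5$ on $\R^m$. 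This is a separation-of-points statement for $L$ that must hold uniformly in $N$ and uniformly in the configuration, and cannot be deduced from a mere pointwise spanning condition on $\R^m$ alone.

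The final step is the explicit construction of the five fields. My plan is to choose smooth vector fields whose polynomial or analytic templates on $\Omega$ have a Lie-algebraic closure rich enough to realize the separation property above -- typically, two ``direction'' fields together with a small number of nonlinear ``activation'' fields whose repeated brackets generate new independent directions of arbitrary jet order at every base point. Once the single-point jet evaluation of $L$ is surjective at each point, one upgrades to simultaneous evaluation at $N$ distinct points by a separation/partition-of-unity argument inside $L$, producing an element of $L$ that realizes any prescribed target vector in $(\R^m)^N$. Smooth boundedness, which is needed for global flows, is imposed at the very end by multiplying by a fixed cutoff equal to $1$ on a neighborhood of $\overline\Omega$; this does not affect any Lie bracket inside $\Ocal_N$ and hence preserves the bracket-generating condition. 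The detailed bracket computations verifying that $d=5$ such fields suffice are the routine but lengthy part that I would defer to the Appendix.
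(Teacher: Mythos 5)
Your overall reduction matches the paper's: pass to the stacked (ensemble) system on the configuration space $\Omega^N$ minus the fat diagonal, observe that brackets commute with the diagonal lift so that $[\widetilde V_i,\widetilde V_j]$ is the lift of $[V_i,V_j]$, invoke Chow--Rashevskii, and thereby reduce the H\"ormander condition on the stacked system to a simultaneous $N$-point evaluation property of the Lie algebra $L=\mathrm{Lie}(V_1,\dots,V_5)$ on $\R^m$. You also correctly identify that boundedness can be imposed afterwards by a cutoff that is $1$ on a neighborhood of $\mathrm{cl}(\Omega)$. All of this is exactly the scaffolding of Section~\ref{S_proof_T1} (Lemma~\ref{L_interp_Horm} is your product-lift reduction).

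The gap lies in how you propose to establish the $N$-point evaluation property. You suggest first proving surjectivity of all finite jets of $L$ at each single base point, then ``upgrading to simultaneous evaluation at $N$ distinct points by a separation/partition-of-unity argument inside $L$.'' This upgrade is not available: a Lie algebra of vector fields is not a module over $C^\infty(\R^m)$, so multiplying an element of $L$ by a bump function in general takes you out of $L$, and there is no localization within $L$. Moreover, surjectivity of jets at one point $p$ controls only the germ at $p$ and says nothing about values at a distinct point $q$; even in the real analytic category this does not yield prescribed values at $q$. The implication from single-point jet surjectivity to $N$-point interpolation simply does not hold in general, and this is precisely the step the theorem turns on. The paper resolves it by a stronger and more concrete structural fact (Proposition~\ref{P_five_Vs}, with the bracket computations carried out in the appendix): $L$ can be made to contain \emph{all} polynomial vector fields, using two linear fields $Ax,Bx$ whose matrices $A,B$ generate $\mathfrak{sl}_m(\R)$, one constant field, and two quadratic fields. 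Once $L$ contains all polynomials, multivariate Lagrange interpolation (Lemma~\ref{L_poly_interp}) gives $N$-point evaluation at any pairwise distinct tuple directly, with no localization argument needed. That explicit construction is exactly what you defer ``to the Appendix,'' but it is not routine bookkeeping --- it is the mathematical content that makes the reduction close, and without a replacement for the flawed partition-of-unity step your argument does not go through.
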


The formal proof of Theorem~\ref{T1} is presented in
Section~\ref{S_proof_T1}, building on classical ideas from control
theory reviewed in Section~\ref{S_Lie}. Before discussing the proof,
let us comment on the content of the theorem.

First, observe that $V_1,\ldots,V_5$ do not depend on $N$. Thus the
\emph{same} five vector fields can be used to interpolate any
arbitrary (but finite) training set. Of course, the controls
$u^1_t,\ldots,u^d_t$ that achieve interpolation do depend on the
training set. If the training set changes, for example if it is
augmented with additional training pairs, the controls will generally
change as well.

Next, the vector fields themselves depend on the ambient dimension
$m$, by the very definition of a vector field on $\R^m$. However, we
stress that no matter how large $m$ is, $d=5$ vector fields always
suffice to achieve universal interpolation for arbitrarily large
training sets.

Further, the case $m=1$ is not covered. This reflects the fact that
$N$ points $x_1,\ldots,x_N$ on the real line cannot be continuously
transported to targets $y_1,\ldots,y_N$ without intersecting, if the
inputs and targets are ordered differently. Such a training task
cannot be achieved by \eqref{CODE_2}, since trajectories
$\{X_t\colon t\in[0,1]\}$ corresponding to different initial
conditions always remain disjoint.

Finally, Theorem~\ref{T1} is an existence result with no quantitative
estimates on, for example, the size of the controls
$u^1_t,\ldots,u^d_t$ needed to achieve interpolation. Similarly,
nothing is asserted regarding the behavior of the map $x\mapsto X^x_1$
away from the training inputs $x_i$. In practice, one does not insist
on exact interpolation, but trades off accuracy for more regular
controls. A rigorous analysis of these issues would be of great
interest, though it is not the subject of this paper. Here we only
provide the following proposition which states the form of the first
derivative of $x\mapsto X^x_1$, along with a bound on its size in
terms of the size of the vector fields and controls. The derivative of
$x\mapsto X^x_t$ at a point $x$ (called \emph{first variation}) is an
$m\times m$ matrix that we denote by $J_t^x$ for Jacobian.

\begin{proposition}
  Consider the CODE \eqref{CODE_2} under the assumptions of existence
  and uniqueness. Then $J_t^x$ solves the linear differential equation
  \begin{equation}\label{first_variation}
    \frac{d}{dt} J_{t}^x  =   \sum_{i=1}^d   u^i_t  D V_i(X^x_t) J_{t}^x,  \quad t \in [0,1],
  \end{equation}
  with initial value $J_{0}^x=I$ (the $m\times m$ identity matrix),
  where $DV_i$ denotes the Jacobian of the vector field $V_i$. The
  operator norm of $J^x_t$ is bounded by
  \[
    \| J^x_t \|_\text{op} \le \exp\left( \int_0^t \| \sum_{i=1}^d
      u^i_s\, DV_i(X^x_s)\|_\text{op} \, ds\right).
  \]
\end{proposition}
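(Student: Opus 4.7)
The plan is to recognize both assertions as standard facts from the smooth dependence theory for ODEs: the first claim is the \emph{variational equation} obtained by differentiating the flow with respect to its initial datum, and the second is a Gr\"onwall-type bound on the resulting linear matrix ODE. Under the standing assumption that \eqref{CODE_2} admits a unique solution for each $x$, and given that $V_1,\ldots,V_d$ are smooth and the controls $u^i_t$ are square-integrable, classical ODE theory furnishes a $C^1$ dependence of $X^x_t$ on $x$, so that $J^x_t = \partial X^x_t/\partial x$ exists.

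The first step is to write $X^x_t$ in integral form as
\[
X^x_t = x + \int_0^t \sum_{i=1}^d u^i_s V_i(X^x_s) \, ds.
\]
Differentiating with respect to $x$ (justified by the aforementioned smooth dependence, which permits swapping $\partial_x$ and $\int_0^t$) and applying the chain rule componentwise yields
\[
J^x_t = I + \int_0^t \sum_{i=1}^d u^i_s \, DV_i(X^x_s) J^x_s \, ds,
\]
which is exactly the integrated form of \eqref{first_variation}, and automatically carries the initial condition $J^x_0 = I$. Differentiating back in $t$ gives the variational ODE.

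For the norm bound, set $A(s) = \sum_{i=1}^d u^i_s\, DV_i(X^x_s)$ so that $J^x$ solves the linear matrix ODE $\frac{d}{dt} J^x_t = A(t) J^x_t$ with $J^x_0 = I$. Taking operator norms in the integral equation and using submultiplicativity gives
\[
\|J^x_t\|_\text{op} \le 1 + \int_0^t \|A(s)\|_\text{op} \, \|J^x_s\|_\text{op} \, ds,
\]
and Gr\"onwall's inequality applied to $\phi(t) := \|J^x_t\|_\text{op}$ delivers the claimed exponential bound.

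The only genuinely delicate point is justifying the $C^1$ dependence of $X^x_t$ on $x$ under the precise regularity assumed (smooth bounded $V_i$, square-integrable controls), so that the interchange of $\partial_x$ with $\int_0^t$ is legitimate; this is however a textbook Carath\'eodory ODE fact, so I do not expect it to pose a real obstacle. Nothing in the argument requires the specific structure of the control system coming from the neural network interpretation: only linearity of the ODE in $J^x_t$ and the standard Gr\"onwall lemma are used.
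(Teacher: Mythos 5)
Your proof is correct and takes essentially the same route as the paper: differentiate the integral form of \eqref{CODE_2} in $x$ to get the variational equation, then apply Gr\"onwall to the integrated linear ODE. The only cosmetic difference is that the paper first fixes an arbitrary unit vector $z$, bounds $\|J^x_t z\|$, and then takes the supremum over $z$, whereas you apply submultiplicativity of the operator norm directly; both yield the same estimate.
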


\begin{proof}
  We obtain \eqref{first_variation} by differentiating the equation
  $X_t^x = x + \sum_{i=1}^d \int_0^t u^i_sV_i(X^x_s) \, ds$ and
  applying the chain rule. To deduce the bound on
  $\|J^x_t\|_\text{op}$, pick an arbitrary unit vector $z\in\R^d$ and
  use \eqref{first_variation} along with the triangle inequality and
  the definition of the operator norm to get
  \[
    \| J^x_t z\| \le 1 + \int_0^t \| \sum_{i=1}^d u^i_s\,
    DV_i(X^x_s)\|_\text{op} \|J^x_s z\| ds.
  \]
  Gronwall's inequality yields
  $\| J^x_t z \| \le \exp( \int_0^t \sum_{i=1}^d |u^i_s|\,
  \|DV_i(X^x_s)\|_\text{op} ds)$. This implies the claimed bound on
  $\|J^x_t\|_\text{op}$ since $z$ was an arbitrary unit vector.
\end{proof}

Some related quantitative questions are discussed in the companion
paper \cite{cuclartei:19}.

\section{Lie brackets and controllability}\label{S_Lie}

In preparation for the proof of Theorem~\ref{T1}, and to aid intuition
as to why such a small number of vector fields can result in a highly
expressive system, we review some ideas from control theory. The
developments take place in a generic Euclidean space $\R^n$; later we
will take $n=mN$, where $N$ is the size of the training set. As we do
not assume the reader is familiar with this theory, we will give
examples in an attempt to convey the underlying intuition.

\begin{definition}
  Let $U$, $V$, $U_1,\ldots,U_d$ be smooth vector fields on $\R^n$.
  \begin{itemize}
  \item The \emph{Lie bracket} $[U,V]$ is the smooth vector field on
    $\R^n$ given by
    \[ [U,V](x) = DV(x) \, U(x) - DU(x) \,V(x),
    \]
    where $DU$ is the Jacobian matrix of partial derivatives; thus its
    $(i,j)$ entry is $\partial U^i / \partial x^j$, and similarly for
    $DV(x)$.
  \item The \emph{Lie algebra generated by $U_1,\ldots,U_d$}, denoted
    by ${\rm Lie}(U_1,\ldots,U_d)$, is the smallest linear space of
    vector fields that contains $U_1,\ldots,U_d$ and is stable under
    Lie brackets. Equivalently, we have
    \[ {\rm Lie}(U_1,\ldots,U_d) = {\rm span}\{\text{$U_1,\ldots,U_d$
        and all iterated Lie brackets}\}.
    \]
    For any $x\in\R^n$, we also consider the subspace of $\R^n$
    obtained by evaluating all the vector fields in the Lie algebra at
    $x$, namely
    \[ {\rm Lie}(U_1,\ldots,U_d)(x) = \{W(x)\colon W\in {\rm
        Lie}(U_1,\ldots,U_d)\} \subseteq \R^n.
    \]
  \end{itemize}
\end{definition}

Let us look at the case of linear vector fields, where the Lie
brackets have simple expressions.

\begin{example}\label{Ex_Lie_linear}
  Consider linear vector fields $U(x)=Ax$ and $V(x)=Bx$, where $A$ and
  $B$ are $n\times n$ matrices. A direct calculation shows that
  $[U,V](x)=(AB-BA)x$. Therefore, ${\rm Lie}(U,V)$ consists of all
  linear vector fields of the form $Cx$, where $C$ is obtained from
  $A$ and $B$ by taking matrix commutators and linear combinations
  finitely many times.
\end{example}

The main tool in the proof of Theorem~\ref{T1} is the Chow--Rashevskii
theorem, which can be stated as follows. For details, see
\cite[Chapter~2]{mon_02}.

\begin{theorem}[Chow--Rashevskii]\label{T_CR}
  Let $\Omega\subseteq\R^n$ be an open connected subset, and assume
  the smooth bounded vector fields $U_1,\ldots,U_d$ satisfy the
  \emph{H\"ormander condition},
  \[ {\rm Lie}(U_1,\ldots,U_d)(x) = \R^n,
  \]
  at every point $x\in\Omega$. Then \emph{controllability} holds: for
  every input/output pair $(x,y)\in\Omega\times\Omega$, there exist
  smooth scalar controls $u^1_t,\ldots,u^d_t$ that achieve $X_1=y$,
  where $X_t$ is the solution of
  \[
    \frac{d}{dt}X_t = u^1_tU_1(X_t) + \cdots + u^d_tU_d(X_t), \quad
    X_0 = x.
  \]
\end{theorem}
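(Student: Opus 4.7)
\medskip

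The plan is to prove the Chow--Rashevskii theorem by showing that the \emph{reachable set} from a fixed initial point $x\in\Omega$ is both open and closed in $\Omega$, hence equal to $\Omega$. Define
\[
\Rcal(x) = \{ y \in \Omega : \exists\ \text{smooth controls } u^i_t \text{ with } X_1 = y\}.
\]
I would first check that reaching any point in any positive time is equivalent to reaching it at time $1$, by rescaling controls (if $X_t$ solves \eqref{CODE_2} with controls $u^i_t$ on $[0,T]$, then $Y_t := X_{tT}$ solves the same equation on $[0,1]$ with controls $Tu^i_{tT}$). Similarly, concatenation of trajectories translates into a smooth control after a standard reparametrization, so $\Rcal(x)$ is closed under concatenation. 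Since the controls are unrestricted in sign, time-reversal $t\mapsto 1-t$ turns trajectories from $x$ to $y$ into trajectories from $y$ to $x$ (with controls $-u^i_{1-t}$). Hence the relation ``$y\in\Rcal(x)$'' is an equivalence relation, and the $\Rcal(x)$ form a partition of $\Omega$.

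The heart of the proof is then to show $\Rcal(x)$ is \emph{open}. For this I would use the fact that iterated Lie brackets can be realized infinitesimally through compositions of flows. Writing $\phi^{U}_t$ for the time-$t$ flow of a vector field $U$ (well defined globally by boundedness), a Taylor expansion gives the classical commutator formula
\[
\phi^V_{-\sqrt{\tau}} \circ \phi^U_{-\sqrt{\tau}} \circ \phi^V_{\sqrt{\tau}} \circ \phi^U_{\sqrt{\tau}}(x) = x + \tau [U,V](x) + O(\tau^{3/2}),
\]
and analogous (higher-order) formulas for iterated brackets of any depth. Each of these composite flows is reached via a piecewise-constant control taking values in $\{\pm U_i\}$, so the resulting endpoint lies in $\Rcal(x)$. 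Fixing vector fields $W_1,\ldots,W_n \in {\rm Lie}(U_1,\ldots,U_d)$ whose values at $x$ span $\R^n$ (possible by H\"ormander), I would build a multi-parameter map
\[
\Phi(\tau_1,\ldots,\tau_n) = \Psi_n(\tau_n) \circ \cdots \circ \Psi_1(\tau_1)(x),
\]
where each $\Psi_j(\tau_j)$ is a composition of flows of the $U_i$'s of total duration $O(\sqrt{|\tau_j|})$, designed so that $\partial_{\tau_j}\Phi(0) = W_j(x)$. Then $D\Phi(0)$ is invertible, and the inverse function theorem (applied near $\tau=0$ after a suitable fractional-power reparametrization) shows that $\Phi$ is an open map at the origin, so $\Rcal(x)$ contains a neighborhood of $x$. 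Since each equivalence class is open and $\Omega$ is connected, there is only one class, namely $\Omega$ itself.

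Finally, one must verify that the \emph{smooth} controls required by the statement can indeed be produced. The construction above yields piecewise-constant controls (each piece activates a single $U_i$). I would mollify these by convolution with a smooth bump against the boundary of each interval, modulated so that the perturbed trajectory still lands precisely on the target; this is a standard application of the implicit function theorem using the surjectivity of $D\Phi$ just established, or, more simply, can be absorbed into the inverse-function step by working from the outset with smooth one-parameter families.

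The main obstacle will be making the passage from Lie brackets at the point $x$ to an effective local surjection of the end-point map fully rigorous: the composite flow $\Phi$ is only differentiable after a fractional-power change of variables, and the higher-order iterated brackets require nested asymptotic expansions. Handling these expansions carefully, and showing that truncation errors do not destroy the inverse-function argument, is where almost all of the technical work sits; the topological step (open + connected $\Rightarrow$ everything) is comparatively easy once openness is in hand.
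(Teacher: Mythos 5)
First, note that the paper itself does not prove Theorem~\ref{T_CR}: it is quoted as a classical result with a pointer to \cite[Chapter~2]{mon_02}, so you are reconstructing the textbook proof rather than competing with an argument in the paper. Your outline is the standard one for a driftless symmetric system (reachability is an equivalence relation by rescaling, concatenation and time reversal; openness of each class via compositions of flows that realize bracket directions; connectedness finishes), and most of it is sound.

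The genuine gap is the step ``$D\Phi(0)$ is invertible, and the inverse function theorem (applied near $\tau=0$ after a suitable fractional-power reparametrization) shows that $\Phi$ is an open map at the origin.'' After the fractional-power substitution $\tau_j\mapsto \mathrm{sgn}(\tau_j)|\tau_j|^{1/k_j}$, the map $\Phi$ is continuous near $0$ but differentiable \emph{only at the single point} $\tau=0$, with expansion $\Phi(\tau)=x+\sum_j \tau_j W_j(x)+o(|\tau|)$; it is not $C^1$ on any neighborhood of the origin, so the classical inverse function theorem does not apply, and differentiability at one point with invertible derivative does not by itself give local openness for a map that is merely continuous nearby. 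What closes the gap is a topological lemma: if $g$ is continuous near $0$ and $g(\tau)=g(0)+A\tau+o(|\tau|)$ with $A$ invertible, then $g(B_r)$ contains a ball around $g(0)$ for small $r$; this is proved with Brouwer degree or the Brouwer fixed point theorem, not the IFT. Alternatively, restructure the argument via the Sussmann--Stefan orbit theorem, or via the classical induction in which the genuine IFT is applied to a composition of flows at a nearby point where the relevant map already has full rank. Without one of these replacements, the central openness claim is unsupported. A smaller point: to get the \emph{smooth} controls demanded by the statement you do not need the mollify-and-correct step (which again leans on the unproved surjectivity); it is enough to reparametrize each constant piece of the concatenated control by a smooth increasing time change all of whose derivatives vanish at the switch times---this leaves the endpoint exactly unchanged and produces a globally smooth control.
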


\begin{example}\label{brackets_flows}
  To see why Lie brackets are relevant for controllability, it is
  useful to consider the case of linear vector fields $U(x)=Ax$ and
  $V(x)=Bx$. A particle starting at $x$ and flowing along the vector
  field $U$ for an amount of time $t$ ends up at $e^{At}x$, where the
  standard matrix exponential is used. This is because $e^{At}x$ is
  the solution of $\frac{d}{dt}X_t = AX_t$, $X_0=x$. Alternating
  between $V$, $U$, $-V$, and $-U$, therefore moves the particle from
  $x$ to $e^{-At}e^{-Bt}e^{At}e^{Bt}x$. A Taylor expansion in $t$
  shows that
  \[
    e^{-At}e^{-Bt}e^{At}e^{Bt}x = x + t^2 (AB-BA)x + O(t^3) \, .
  \]
  Therefore if $t$ is small, the alternating behavior produces motion
  in the direction $(AB-BA)x=[U,V](x)$. For general vector fields, an
  analogous computation gives the same result. The Chow--Rashevskii
  theorem is now quite intuitive: controllability holds if at each
  point one can produce motion in all directions. However, moving in
  the Lie bracket direction requires more ``energy'' (larger and more
  oscillatory controls), reflected by the short-time asymptotic $t^2$.
\end{example}

\begin{example}
  To see that a small number of vector fields can generate very large
  Lie algebras, consider the two vector fields $U(x)=x^2$ and
  $V(x)=x^k$ on $\R$, where $k\in\N$. Note that vector fields on $\R$
  are just scalar functions. Then
  $[U,V](x)=V'(x)U(x)-U'(x)V(x)=(k-2)x^{k+1}$. As a result, the Lie
  algebra generated by $x^2$ and $x^3$ contains all $x^k$, $k\ge2$.
\end{example}

In the context of deep learning, one can view the Lie bracket
operation as a way to generate \emph{features}. This requires a large
number of layers when brackets are iterated. Indeed, each layer is
associated with an Euler step of the discretized
CODE. Example~\ref{brackets_flows} then shows that four layers are
needed to move along the length-2 bracket $[U,V]$. The number of
layers required to move along a general length-$n$ bracket is
exponential in $n$.

On the other hand, the dimensionality of the feature space generated
in this way can also grow extremely quickly due to non-commutativity
of Lie brackets. Let us illustrate this using the \emph{free Lie
  algebra} on $d$ generators $Y_1,\ldots,Y_d$. This is an abstract Lie
algebra whose elements are formal linear combinations of {\em Lie
  words} in the generators. A Lie word is a formal expression
involving the generators and the bracket $[\fdot,\fdot]$, for example
$[Y_1,[Y_2,Y_1]]$ and $[Y_2,[Y_1,Y_1]]$. Two Lie words are considered
equal if they can be transformed into one another using the axioms
satisfied by the bracket, namely bilinearity, anticommutativity, and
the Jacobi identity. For example, $[Y_2,[Y_1,Y_1]]=[Y_2,0]=0$. The
dimension of the subspace $\Lcal_n$ spanned by all Lie words of length
$n$ is given by \emph{Witt's dimension formula},
\[
  \dim\Lcal_n = \frac{1}{n} \sum_{k|n}\mu(k)d^{n/k},
\]
see \cite{mag_kar_sol_76}, Theorem~5.11. Here the sum ranges over all
$k$ that divide $n$, and $\mu(\fdot)$ is the \emph{M\"obius function}
which takes values in $\{-1,0,1\}$. The asymptotic behavior for large
$n$ is exponential,
\[
  \dim\Lcal_n \sim d^n.
\]
This is related to the fact that the Lie bracket is
non-commutative. For comparison, the space of polynomials of degree at
most $n$ in $d$ commuting variables has dimension
${n+d \choose d}\sim n^d$, which only grows polynomially in $n$.

If $U_1,\ldots,U_d$ are smooth vector fields on $\R^n$ that are
sufficiently unstructured or ``generic'', we expect the Lie algebra
that they generate to behave similarly to the free Lie algebra on $d$
generators. In particular, we expect the dimensionality of the feature
space to grow very quickly. Notice, however, that the price to pay is
exponentially growing depth to generate all brackets.

\section{Universal $N$-point interpolators exist}\label{S_proof_T1}

In this section we apply the Chow-Rashevskii theorem and algebraic
results on polynomial vector fields to prove Theorem~\ref{T1}.  The
proof is constructive and relies on 5 specific linear and quadratic
vector fields $V_1 , \ldots, V_5$ for which \eqref{CODE_2} is a
universal $N$-point interpolator.

We select an arbitrary $N$ and work on the set
$\overline\Omega \subset (\R^m)^N $ of pairwise distinct $N$-tuples
$(x_1,\ldots,x_N)$ of points in $\Omega$. Here $m\ge2$ is the ambient
dimension and $N$ represents the number of training pairs as in
Section~\ref{S_gen_int}. In other words, we consider the bounded open
connected subset
\[
  \overline\Omega=\Omega^N\setminus\Delta
\]
of $(\R^m)^N$, where
\[
  \Delta = \{(x_1,\ldots,x_N)\in\Omega^N\colon \text{$x_i=x_j$ for
    some $i\ne j$}\}.
\]
($\overline\Omega$ is connected because $m\ge2$.) Then, given $d$
smooth bounded vector fields $V_1,\ldots,V_d$ on $\R^m$,
\eqref{CODE_2} is a universal $N$-point interpolator in $\Omega$ if
and only if the ``stacked'' system
\[
  \frac{d}{dt}\begin{pmatrix}X^{x_1}_t\\ \vdots\\
    X^{x_N}_t\end{pmatrix} = u^1_t \begin{pmatrix}V_1(X^{x_1}_t) \\
    \vdots \\ V_1(X^{x_N}_t) \end{pmatrix} + \cdots +
  u^d_t \begin{pmatrix}V_d(X^{x_1}_t) \\ \vdots \\
    V_d(X^{x_N}_t) \end{pmatrix}
\]
can bring any initial point
$\bar x=(x_1,\ldots,x_N)\in\overline\Omega$ to any target
$\bar y=(y_1,\ldots,y_N)\in\overline\Omega$ by means of a suitable
choice of controls $u^1_t,\ldots,u^d_t$. By the Chow--Rashevskii
theorem, this holds if and only if the stacked vector fields
\[
  V_i^{\oplus N}(\bar x) := \begin{pmatrix}V_i(x_1) \\ \vdots \\
    V_i(x_N) \end{pmatrix}, \quad i=1,\ldots,d,
\]
satisfy the H\"ormander condition at every
$\bar x=(x_1,\ldots,x_N)\in\overline\Omega$. The following definition
and subsequent lemma strongly hint at how we plan to verify the
H\"ormander condition.

\begin{definition}\label{D_inter_tuple}
  A collection $\Vcal$ of vector fields on $\R^m$ is said to {\em
    interpolate at a tuple $(x_1,\ldots,x_N)\in\overline\Omega$} if
  for every $(v_1,\ldots,v_N)\in(\R^m)^N$ there exists a vector field
  $\widehat V\in\Vcal$ such that $\widehat V(x_i)=v_i$ for all
  $i=1,\ldots,N$.
\end{definition}

\begin{lemma}\label{L_interp_Horm}
  Let $V_1,\ldots,V_d$ be smooth vector fields on $\R^m$ such that
  ${\rm Lie}(V_1,\ldots,V_d)$ interpolates at the tuple
  $\bar x=(x_1,\ldots,x_N)\in\overline\Omega$. Then
  \[ {\rm Lie}(V_1^{\oplus N},\ldots,V_d^{\oplus N})(\bar x) =
    (\R^m)^N,
  \]
  that is, the vector fields $V_1^{\oplus N},\ldots,V_d^{\oplus N}$
  satisfy the H\"ormander condition at $\bar x$.
\end{lemma}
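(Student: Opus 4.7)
The plan is to exploit the fact that the stacking operation $V\mapsto V^{\oplus N}$ is a Lie algebra homomorphism, so the Lie algebra generated by the stacked vector fields consists precisely of stacks of vector fields in the original Lie algebra. Combined with the interpolation hypothesis, this immediately yields the H\"ormander condition.

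First, I would establish the key identity
\[
  [V^{\oplus N}, W^{\oplus N}] = [V,W]^{\oplus N}
\]
for any smooth vector fields $V,W$ on $\R^m$. This is a direct computation: since $V^{\oplus N}(\bar x)=(V(x_1),\ldots,V(x_N))$ depends on the $i$-th block only through $x_i$, the Jacobian $DV^{\oplus N}(\bar x)$ is block diagonal with blocks $DV(x_i)$. The $i$-th block of $DW^{\oplus N}(\bar x)V^{\oplus N}(\bar x) - DV^{\oplus N}(\bar x)W^{\oplus N}(\bar x)$ then equals $DW(x_i)V(x_i)-DV(x_i)W(x_i)=[V,W](x_i)$. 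Combined with the obvious linearity $(aV+bW)^{\oplus N}=aV^{\oplus N}+bW^{\oplus N}$, this shows that the map $V\mapsto V^{\oplus N}$ preserves linear combinations and brackets.

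Next, I would use this to show
\[
  \mathrm{Lie}(V_1^{\oplus N},\ldots,V_d^{\oplus N}) = \bigl\{W^{\oplus N} \colon W\in\mathrm{Lie}(V_1,\ldots,V_d)\bigr\}.
\]
The inclusion $\subseteq$ follows by induction on the length of iterated brackets: the right-hand side contains the generators $V_i^{\oplus N}$ and, by the identity above, is closed under brackets and linear combinations. The reverse inclusion is equally straightforward since any $W\in\mathrm{Lie}(V_1,\ldots,V_d)$ is built from $V_1,\ldots,V_d$ by the same operations, which commute with stacking.

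Finally, evaluating at $\bar x$ gives
\[
  \mathrm{Lie}(V_1^{\oplus N},\ldots,V_d^{\oplus N})(\bar x) = \bigl\{(W(x_1),\ldots,W(x_N)) \colon W\in\mathrm{Lie}(V_1,\ldots,V_d)\bigr\},
\]
and the interpolation hypothesis (Definition \ref{D_inter_tuple}) says exactly that this set equals $(\R^m)^N$, which is the H\"ormander condition at $\bar x$. The only real content is the compatibility of Lie brackets with stacking; once that is in place, everything else is bookkeeping, so I do not anticipate any genuine obstacle.
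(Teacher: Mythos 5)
Your proof is correct and uses the same central idea as the paper: the identity $[V^{\oplus N}, W^{\oplus N}] = [V,W]^{\oplus N}$, combined with the interpolation hypothesis. The paper's proof is slightly more economical (it only uses the inclusion $\{W^{\oplus N} : W\in\mathrm{Lie}(V_1,\ldots,V_d)\}\subseteq\mathrm{Lie}(V_1^{\oplus N},\ldots,V_d^{\oplus N})$, since that is all the argument needs), whereas you also verify the reverse inclusion and spell out the block-diagonal Jacobian computation; but this is the same route, just with a bit more detail.
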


\begin{proof}
  Pick any $\bar v=(v_1,\ldots,v_N)\in(\R^m)^N$. Since
  ${\rm Lie}(V_1,\ldots,V_d)$ interpolates at $\bar x$, it contains a
  vector field $\widehat V$ such that
  \[
    \widehat V^{\oplus N}(\bar x) = \begin{pmatrix}\widehat V(x_1) \\
      \vdots \\ \widehat V(x_N) \end{pmatrix} = \begin{pmatrix}v_1 \\
      \vdots \\ v_N \end{pmatrix}.
  \]
  Moreover, due to the identity
  $[V^{\oplus N},W^{\oplus N}]=[V,W]^{\oplus N}$, which is valid for
  any smooth vector fields $V,W$ on $\R^m$, it follows that
  ${\rm Lie}(V_1^{\oplus N},\ldots,V_d^{\oplus N})$ contains
  $\widehat V^{\oplus N}$. Therefore
  $\bar v\in{\rm Lie}(V_1^{\oplus N},\ldots,V_d^{\oplus N})(\bar x)$,
  which completes the proof.
\end{proof}

We now confirm that the collection of all polynomial vector fields
interpolates any number of pairwise distinct points.

\begin{lemma}\label{L_poly_interp}
  The set of all polynomial vector fields on $\R^m$ interpolates at
  every tuple $(x_1,\ldots,x_N)\in\overline\Omega$.
\end{lemma}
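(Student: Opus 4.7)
The plan is to use a Lagrange-type interpolation argument, component by component. Because the requirement $\widehat V(x_i) = v_i$ is linear in $\widehat V$ and the target vectors $v_i \in \R^m$ are arbitrary, it suffices to construct, for each $i = 1, \ldots, N$, a scalar polynomial $p_i \colon \R^m \to \R$ satisfying $p_i(x_j) = \delta_{ij}$. Once such ``Lagrange polynomials'' are available, the vector field
\[
  \widehat V(x) = \sum_{i=1}^N p_i(x)\, v_i
\]
is polynomial and achieves $\widehat V(x_i) = v_i$ for every $i$.

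To build $p_i$, I would first observe that since $(x_1, \ldots, x_N) \in \overline\Omega$, the points are pairwise distinct, so for every pair $i \ne j$ one can pick a linear functional $\ell_{ij} \colon \R^m \to \R$ with $\ell_{ij}(x_i - x_j) \ne 0$ (for instance, projection onto some coordinate where $x_i$ and $x_j$ differ). Then define
\[
  p_i(x) = \prod_{\substack{j = 1 \\ j \ne i}}^{N} \frac{\ell_{ij}(x - x_j)}{\ell_{ij}(x_i - x_j)}.
\]
This is manifestly a polynomial in the coordinates of $x$; the factor indexed by $j$ vanishes at $x = x_j$, so $p_i(x_j) = 0$ for $j \ne i$, while each factor equals $1$ at $x = x_i$, so $p_i(x_i) = 1$.

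There is essentially no hard step: the argument is the standard multivariate Lagrange construction, and the only point that uses the hypothesis $(x_1, \ldots, x_N) \in \overline\Omega$ is the pairwise distinctness needed to choose the functionals $\ell_{ij}$. Since the resulting $\widehat V$ is a finite linear combination (with polynomial coefficients) of constant vectors in $\R^m$, it is a polynomial vector field on $\R^m$, and the interpolation identity $\widehat V(x_i) = v_i$ is immediate from $p_i(x_j) = \delta_{ij}$. This completes the proof.
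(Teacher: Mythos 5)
Your proof is correct and takes essentially the same route as the paper: the paper simply invokes ``standard multivariate polynomial interpolation'' to produce, component by component, polynomials matching prescribed values at the pairwise distinct $x_i$, while you spell out one concrete realization of that standard fact via a Lagrange-type construction with separating linear functionals $\ell_{ij}$.
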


\begin{proof}
  The result follows by standard multivariate polynomial
  interpolation. Specifically, consider arbitrary
  $(x_1,\ldots,x_N)\in\overline\Omega$ and
  $(v_1,\ldots,v_N)\in(\R^m)^N$. Since the $x_i$ are pairwise
  distinct, it is possible to find, for each $j=1,\ldots,m$, a
  polynomial $p^j(x)$ on $\R^m$ such that $p^j(x_i) = v_i^j$ for
  $i=1,\ldots,N$. The vector field
  \[
    \widehat V(x) = \begin{pmatrix}p^1(x) \\ \vdots \\
      p^m(x) \end{pmatrix}
  \]
  is then polynomial and satisfies $\widehat V(x_i)=v_i$ for all
  $i=1,\ldots,N$.
\end{proof}

Thanks to the Chow--Rashevskii Theorem as stated in
Theorem~\ref{T_CR}, as well as Lemma~\ref{L_interp_Horm}
and~\ref{L_poly_interp}, in order to prove Theorem~\ref{T1} it only
remains to exhibit five smooth vector fields that do not depend on
$N$, and whose Lie algebra contains all polynomial vector fields.
This is accomplished by the following result, which therefore
completes the proof of the theorem. (Note that we actually want
\emph{bounded} vector fields. This is easily achieved by multiplying
the vector fields below by a smooth compactly supported function
$\varphi(x)$ that equals one on $\Omega$.)

\begin{proposition}\label{P_five_Vs}
  There exist $d=5$ smooth vector fields $V_1,\ldots,V_5$ on $\R^m$
  with the property that ${\rm Lie}(V_1,\ldots,V_5)$ contains all
  polynomial vector fields. Specifically, one can take
  \[
    V_1(x) = Ax, \quad V_2(x) = Bx,
  \]
  \[
    V_3(x) = \begin{pmatrix}0 \\ \vdots \\ 0 \\ 1\end{pmatrix}, \quad
    V_4(x) = \begin{pmatrix}(x^m)^2 \\ 0 \\ \vdots \\ 0\end{pmatrix},
    \quad V_5(x) = \begin{pmatrix}x^1 x^m \\ x^2x^m \\ \vdots \\
      (x^m)^2 \end{pmatrix}
  \]
  where $A$ and $B$ are suitable traceless $m\times m$ matrices, and
  $x^1,\ldots,x^m$ denote the components of $x$.\footnote{A traceless
    matrix is one whose trace is equal to zero.}
\end{proposition}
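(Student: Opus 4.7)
The plan is to verify that $\Lcal := {\rm Lie}(V_1, \ldots, V_5)$ contains every polynomial vector field on $\R^m$. Since the Lie bracket of a degree-$p$ with a degree-$q$ polynomial vector field has degree $p + q - 1$, it is natural to proceed in three stages: first generate all constant and linear vector fields, then all quadratic vector fields, and finally induct on degree. The boundedness requirement is handled at the end by multiplying each $V_i$ by a smooth compactly supported cutoff $\varphi$ equal to $1$ on $\Omega$, which does not affect the Lie algebra of germs on $\Omega$.

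For Stage 1, choose traceless matrices $A, B$ whose iterated matrix commutators exhaust $\mathfrak{sl}_m$; this is a standard algebraic fact, satisfied for instance by a generic traceless diagonal $A$ together with a cyclic permutation $B$. Since $[Ax, Bx] = (AB - BA)x$, $\Lcal$ contains $Cx$ for every $C \in \mathfrak{sl}_m$. Next, $[Cx, V_3] = -Ce_m$ together with $\{Ce_m : C \in \mathfrak{sl}_m\} = \R^m$ (using $m \geq 2$) produces every constant vector field. Finally, the direct computation $[V_3, V_5] = (I + e_m e_m^T) x$ yields a linear vector field of trace $m + 1 \neq 0$; combined with $\mathfrak{sl}_m$, this gives every linear vector field.

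For Stage 2, the key observation is that since $\Lcal$ is now closed under brackets with every linear vector field, the set of quadratic vector fields in $\Lcal$ is stable under the natural $\mathfrak{gl}_m$-action $C \cdot V = [Cx, V]$. The $GL_m$-representation of quadratic vector fields $\mathrm{Sym}^2(\R^m)^* \otimes \R^m$ splits as a direct sum of two irreducible subrepresentations: the ``trace'' piece $T := \{(a^T x) x : a \in \R^m\} \cong (\R^m)^*$, and its complement $\ker(\mathrm{div})$, which is irreducible as a $GL_m$-module by standard representation theory. Now $V_5 = x^m x \in T$ is nonzero, and $V_4 = (x^m)^2 e_1$ has zero divergence, so is a nonzero element of $\ker(\mathrm{div})$. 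By irreducibility, the $\mathfrak{gl}_m$-invariant subspace generated by $V_5$ equals $T$, and that generated by $V_4$ equals $\ker(\mathrm{div})$. Together, $\Lcal$ contains every quadratic vector field.

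For Stage 3, I prove by induction on $n \geq 2$ that every polynomial vector field of degree $\leq n$ lies in $\Lcal$. For a monomial $x^\alpha e_j$ of degree $n + 1$: if some $p \neq j$ satisfies $\alpha_p \geq 1$, the identity
\[
  [Q_p, x^{\alpha - e_p} e_j] = (n - 1)\, x^\alpha e_j, \qquad Q_p := x^p x,
\]
expresses $x^\alpha e_j$ as a bracket of a quadratic (in $\Lcal$ by Stage 2) with a degree-$n$ field (in $\Lcal$ by induction). In the degenerate case $\alpha = (n + 1) e_j$, for $n \geq 3$ one uses $[(x^j)^2 e_j, (x^j)^n e_j] = (n - 2)(x^j)^{n+1} e_j$, while for $n = 2$ the bracket $[(x^j)^2 e_p, x^j x^p e_j] = (x^j)^3 e_j - 2 (x^j)^2 x^p e_p$ (with $p \neq j$) yields $(x^j)^3 e_j$ modulo a non-degenerate degree-3 monomial already known to be in $\Lcal$. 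The main obstacle of the proof is Stage 2: the representation-theoretic argument above neatly avoids a lengthy enumeration, but relies on the irreducible decomposition of $\mathrm{Sym}^2(\R^m)^* \otimes \R^m$; an entirely elementary alternative is to explicitly enumerate enough brackets of $V_4, V_5$ with linear vector fields and their iterations to span all quadratic monomials $x^i x^j e_k$.
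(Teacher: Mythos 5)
Your proof is correct, and while it follows the same broad outline as the paper---constants and linears first, then quadratics, then induction on degree---it diverges in a substantive way at the quadratic stage. Both proofs begin by generating $\mathfrak{sl}_m(\R)$ from $V_1,V_2$, then obtain constants by bracketing $V_3=\partial_m$ against linear fields, then escape $\mathfrak{sl}_m$ to get all linear vector fields: the paper does this via $[V_3,V_4]=2x^m e_1$, you via $[V_3,V_5]=(I+e_m e_m^\top)x$; both are fine. The real difference is the degree-two case. The paper's appendix carries out a page of explicit iterated brackets to exhibit every quadratic monomial vector field. You instead observe that once all linear fields are available, the quadratic part of the Lie algebra is $\mathfrak{gl}_m$-invariant, and invoke the $GL_m$-decomposition of $\mathrm{Sym}^2(\R^m)^*\otimes\R^m$ into the two irreducible pieces $T=\{(a^\top x)\,x\}$ and $\ker(\mathrm{div})$; since $V_5\in T$ and $V_4\in\ker(\mathrm{div})$ are nonzero, irreducibility yields the whole space. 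This is considerably shorter and more conceptual, but it outsources the crux to a representation-theoretic fact (which is indeed standard and correct: by the Pieri rule $\mathrm{Sym}^2 V^*\otimes V$ has exactly the two constituents $V^*$ and its complement). You flag this dependence honestly. Your degree induction is also tidier: the single Euler-identity bracket $[x^p x,\,x^{\alpha-e_p}e_j]=(n-1)\,x^\alpha e_j$ for $p\ne j$ disposes of all non-degenerate monomials uniformly, whereas the appendix branches on the value of $\alpha_1$ and uses less transparent identities in the $\alpha_1=3$ case. The tradeoff is clear: the paper stays entirely elementary at the cost of length; you gain brevity and structure at the cost of a representation-theoretic black box, and you correctly note that the elementary enumeration is available as a fallback.
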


\begin{proof}
  We divide the proof into three separate statements, that together
  imply the claimed result. We use $e_1,\ldots,e_m$ to denote the
  canonical basis vectors in $\R^m$.

  \underline{Claim 1:} There is a choice of traceless $m\times m$
  matrices $A$ and $B$ such that
  ${\rm Lie}(V_1,V_2)=\{Cx\colon \text{$C$ is traceless}\}$.

  Indeed, Example~\ref{Ex_Lie_linear} shows that $\{Cx\colon \text{$C$
    is traceless}\}$ is a Lie algebra of vector fields that can be
  identified with the Lie algebra of all traceless $m\times m$
  matrices. The latter is the {\em special linear Lie algebra}
  $\mathfrak{sl}_m(\R)$, which is known to admit two generators $A$
  and $B$; see for instance \cite{kur_51}, where it is shown that in
  fact any semi-simple Lie algebra admits two generators.

  \underline{Claim 2:} With $A$ and $B$ as above,
  ${\rm Lie}(V_1,V_2,V_3,V_4)$ contains all linear vector fields.

  Indeed, we know it contains all vector fields $Cx$ with $C$
  traceless. Moreover, it contains the Lie bracket
  $[V_3,V_4](x)=2x^m e_1=2e_1e_m^\top x$. Expressing the identity
  matrix $I = (I-m e_1e_m^\top) + m e_1e_m^\top$ as a sum of a
  traceless matrix and a multiple of $2e_1e_m^\top$, it follows that
  the identity vector field $W(x)=x$ is in
  ${\rm Lie}(V_1,V_2,V_3,V_4)$. This proves the claim, since any
  matrix can be expressed as a traceless matrix plus a multiple of the
  identity.

  \underline{Claim 3:} $V_3$, $V_4$, and $V_5$ together with all
  linear vector fields generate all polynomial vector fields.

  This is asserted without proof by \cite{lei_pol_97}, and can be
  verified by direct computation. We do this in full detail in the
  appendix.

  Combining Claim~2 and Claim~3 proves the proposition.
\end{proof}

\begin{remark}
  The use of polynomials in the above proof is due to their relatively
  tractable structure. We believe the conclusion remains true for
  other classes of vector fields, also on curved spaces. For example,
  on the torus a natural choice would be to consider Fourier basis
  functions.
\end{remark}

\section{Generic expressiveness}\label{S_genexp}

Theorem~\ref{T1} shows that universal interpolators can be constructed
using just five vector fields, but not how common or rare such vector
fields are. Our next goal is to prove that parsimonious yet expressive
systems exist in great abundance. To do so, rather than using
\eqref{CODE_2} to interpolate the outputs $y_i$ directly, we will use
it to interpolate the transformed outputs $x_i+\lambda^{-1}y_i$, where
$\lambda>0$ is a (trained) constant.  Thus the input $x$ and output
$y$ are related by
\begin{equation}\label{res_readout}
  y=\lambda (X^x_1 - x),
\end{equation}
where the right-hand side can be interpreted as a particular readout
map.  Our next result shows that with five or more appropriately
\emph{randomly chosen} nonlinear vector fields, the system
\eqref{CODE_2} \& \eqref{res_readout} is sufficiently expressive to
interpolate almost every training set.

The setup of the theorem is as follows. Fix a dimension $m\ge2$ and a
bounded open connected subset $\Omega\subset\R^m$. Consider $d\ge5$
vector fields $V_1,\ldots,V_d$ that depend on a parameter $z\in\R^l$
for some $l\in\N$, in addition to their dependence on the point
$x\in\R^m$. More precisely, we assume that the components of the $V_i$
are of the form
\[
  V_i^j(x) = V_i^j(x,z), \quad i=1,\ldots,d,\quad j=1,\ldots,m, \quad
  x\in\Omega, \quad z\in\R^l,
\]
where each map $(x,z)\mapsto V_i^j(x,z)$ is real analytic in a
neighborhood of $\text{cl}(\Omega)\times\R^l$, with
$\text{cl}(\Omega)$ denoting the closure of $\Omega$.\footnote{To
  ensure that the vector fields are globally bounded on $\R^m$ for
  each fixed $z$, we multiply the given real analytic functions by a
  compactly supported function $\varphi(x)$ that equals one on
  $\Omega$. This ensures global existence and uniqueness of solutions
  to \eqref{CODE_2}. The form of the vector fields outside $\Omega$
  does not matter for the theorem.}  The vector fields are now chosen
randomly by replacing the parameter $z$ by a random vector $Z$ in
$\R^l$. We thus consider the randomly chosen vector fields
$V_i=V_i(\fdot,Z)$, $i=1,\ldots,d$. We can now state our main theorem.

\begin{theorem}\label{T2}
  Assume that
  \begin{enumerate}
  \item\label{T2_1} the law of $Z$ admits a probability density on
    $\R^l$,
  \item\label{T2_2} for some $\widehat z\in\R^l$, the Lie algebra
    generated by the $d$ vector fields
    $\widehat V_i=V_i(\fdot,\widehat z)$ corresponding to $\widehat z$
    contains all polynomial vector fields.
  \end{enumerate}
  Then with probability one, \eqref{CODE_2} \& \eqref{res_readout}
  form a universal interpolator for generic training data in the
  following sense. Consider a training set
  $\{(x_i,y_i)\in\Omega\times\Omega\colon i=1,\ldots,N\}$ of arbitrary
  size, where $(x_1,\ldots,x_N)$ is drawn from an arbitrary density on
  $(\R^m)^N$ and the $y_i$ are pairwise distinct but otherwise
  arbitrary. Then, with probability one, there exist controls
  $u^1_t,\ldots,u^d_t$ and a constant $\lambda>0$ such that
  $y_i=\lambda (X^{x_i}_1-x_i)$ for all $i$.
\end{theorem}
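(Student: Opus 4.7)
My plan is to reduce to the Chow--Rashevskii theorem via a real-analytic genericity argument at the stacked level. Set $\widehat V_i = V_i(\fdot,\widehat z)$. By hypothesis~(ii) combined with Lemmas~\ref{L_interp_Horm} and~\ref{L_poly_interp}, the stacked vector fields $\widehat V_i^{\oplus N}$ satisfy the H\"ormander condition at every $\bar x\in\overline\Omega$. Fix any $\bar x_0\in\overline\Omega$. Since $\mathrm{Lie}(\widehat V_1,\ldots,\widehat V_d)$ is the linear span of iterated Lie brackets, there exist formal iterated bracket expressions $w_1,\ldots,w_{mN}$ in $d$ symbols such that, when evaluated with $\widehat V_1,\ldots,\widehat V_d$, the vectors $w_k^{\oplus N}(\bar x_0)$ form a basis of $(\R^m)^N$.

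For arbitrary $z\in\R^l$, let $w_k(z)$ denote the same bracket expressions, now evaluated with $V_1(\fdot,z),\ldots,V_d(\fdot,z)$ in place of $\widehat V_i$. Since each $(x,z)\mapsto V_i^j(x,z)$ is real analytic on a neighborhood of $\text{cl}(\Omega)\times\R^l$, and Lie brackets involve only finite sums, products, and partial derivatives in $x$, the determinant
\[
  F(\bar x, z) = \det\bigl[\, w_1(z)^{\oplus N}(\bar x) \mid \cdots \mid w_{mN}(z)^{\oplus N}(\bar x) \,\bigr]
\]
is jointly real analytic on $\overline\Omega\times\R^l$. By construction $F(\bar x_0, \widehat z)\ne 0$, so $F\not\equiv 0$, and consequently its zero set has Lebesgue measure zero.

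By assumption~(i), $Z$ admits a density on $\R^l$, and the training inputs $\bar X=(x_1,\ldots,x_N)$ are drawn (independently of $Z$) from a density on $(\R^m)^N$, so the joint law of $(\bar X, Z)$ has a density. Tonelli then yields $\P[F(\bar X, Z)=0]=0$. On the complementary event, the vectors $w_k(Z)^{\oplus N}(\bar X)$ form a basis of $(\R^m)^N$, so the H\"ormander condition holds at $\bar X$ for the stacked system with vector fields $V_i(\fdot,Z)^{\oplus N}$; by continuity of $F$ it also holds throughout an open connected neighborhood $U\subset\overline\Omega$ of $\bar X$. The Chow--Rashevskii theorem applied on $U$ then produces, for any target in $U$, smooth controls steering $\bar X$ there.

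To realize the readout~\eqref{res_readout}, consider the target $\bar y^\star(\lambda) = (x_i + \lambda^{-1} y_i)_{i=1}^N$. For $\lambda$ sufficiently large this tuple has pairwise distinct entries and lies in $U$, since $\bar y^\star(\lambda)\to\bar X$ as $\lambda\to\infty$. Choosing such a $\lambda>0$ and applying the previous step produces controls with $X^{x_i}_1 = x_i+\lambda^{-1}y_i$, equivalently $y_i=\lambda(X^{x_i}_1-x_i)$ for every $i$. I expect the main obstacle to be the real-analytic genericity step: one must verify carefully that the iterated bracket construction preserves joint real analyticity in $(x,z)$ and single out one specific bracket tuple whose determinant at $(\bar x_0,\widehat z)$ is nonzero, the latter being the sole place where hypothesis~(ii) enters.
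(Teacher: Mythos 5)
Your proof follows essentially the same route as the paper's: both reduce to showing that a real-analytic nondegeneracy function of $(\bar x, z)$ is not identically zero on $\overline\Omega\times\R^l$, infer that its zero set has Lebesgue measure zero, and then invoke Chow--Rashevskii after localizing in a neighborhood $U$ of $\bar X$ and tuning $\lambda$ so that the shifted target $\bar x+\lambda^{-1}\bar y$ lands in $U$. The cosmetic difference is that you track a single $mN\times mN$ determinant $F$ built from one fixed $mN$-tuple of bracket expressions chosen to be nonvanishing at $(\bar x_0,\widehat z)$, whereas the paper works with the quantity $\Gamma_n$, a sum of all squared $mN\times mN$ minors of the matrix with columns $L_j(V_1,\ldots,V_5)(\bar x)$; both implement the same analyticity idea, yours perhaps a little more directly.

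One structural step is left implicit: the theorem requires a single almost-sure event in $Z$ that works simultaneously for \emph{every} training-set size $N$ and \emph{every} density on $(\R^m)^N$, whereas your Tonelli argument fixes $N$ and a particular density, yielding only $\P[F(\bar X,Z)=0]=0$ for that choice. To match the stated quantifier order you should first use Fubini for Lebesgue measure to extract, for each $N$, a Lebesgue-null set $\Mcal_N\subset\R^l$ of $z$ for which the $\bar x$-slice of $\{F=0\}$ fails to be Lebesgue null, then take $\Mcal=\bigcup_{N}\Mcal_N$, and observe that for $Z\notin\Mcal$ (which holds almost surely since $Z$ has a density) and each $N$, the bad $\bar x$-set is Lebesgue null and hence of probability zero under \emph{any} density. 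This is exactly how the paper's Lemma~\ref{L_pfT2} together with the union $\Mcal=\bigcup_N\Mcal_N$ are organized; it does not change the substance of your argument but is needed to arrive at the theorem's stated conclusion.
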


\begin{example}\label{E_T2}
  Fix $k\ge2$ and $d\ge5$. In order to specify $d$ polynomial vector
  fields of degree at most $k$, one needs $l=dm{m+k\choose m}$ real
  coefficients. Let $\R^l$ be the space of all such sets of
  coefficients, and let $V_i(\fdot,z)$, $i=1,\ldots,d$, be the
  polynomial vector fields specified by $z\in\R^l$. Then
  $(x,z)\mapsto V_i^j(x,z)$ is a polynomial, and in particular real
  analytic. By letting $\widehat z\in\R^l$ be the coefficients of the
  vector fields in Proposition~\ref{P_five_Vs}, we see that condition
  \ref{T2_2} of the theorem holds. Condition \ref{T2_1} holds whenever
  $Z$ is drawn from an arbitrary density on $\R^l$.
\end{example}

The proof of Theorem~\ref{T2} is presented in
Section~\ref{S_proof_T2}. Ultimately it is based on the fact that any
real analytic function is either identically zero, or nonzero on a set
of full Lebesgue measure.  Condition~\ref{T2_2} is used to exclude the
former possibility, while condition~\ref{T2_1} is used to avoid zeros
which can exist, but only constitute a nullset.

The central message of Theorem~\ref{T2} is this. The seemingly strong
property of universal interpolation is not only achieved in a
dimension-free manner as shown in Theorem~\ref{T1}. It is actually a
\emph{generic} property in the class of real analytic vector
fields. Specifically, by drawing the vector fields randomly in the
described manner, one is guaranteed with probability one that the
resulting vector fields produce a universal interpolator (at least for
generic training data and allowing for the additional trained readout
parameter $\lambda$). Possible sampling schemes include nondegenerate
normal distributions and uniform distributions on bounded open regions
of the parameter space $\R^l$.  The theorem is however more general
than that, and we make use of this in Corollary~\ref{C2} below.

The $\lambda$-scaling in \eqref{res_readout} is reminiscent of batch
normalization, especially if we were to use different parameters
$\lambda$ for different coordinates. Our mathematical results do not
require this, however. Moreover, thanks to the normalization it is not
a restriction to work with a bounded set $\Omega$.

In practice, the CODE \eqref{CODE_2} is replaced by a discretization,
say with $M$ steps. This yields a network of depth $M$. After randomly
choosing $d$ vector fields, the number of trainable parameters
(including $\lambda$ in \eqref{res_readout}) becomes $Md+1$. This
tends to be much smaller than the total number of parameters needed to
specify the vector fields, and can potentially simplify the training
task significantly. The required depth $M$ depends on the desired
training error. The fact that most parameters are chosen randomly
reinforces the view that randomness is a crucial ingredient for
training. Investigating different sampling schemes and training
algorithms in this setting is an important research question, that
will be treated elsewhere.

The fact that the sampling density for the vector field coefficients
can be completely arbitrary leads to the following simple proof that
the universal interpolator property is in a certain sense generic in
the class of all smooth vector fields.

\begin{corollary}\label{C1}
  Fix $m\ge2$ and a bounded open connected subset
  $\Omega\subset\R^m$. Consider $d\ge5$ smooth vector fields
  $\widehat V_1,\ldots,\widehat V_d$, and a tolerance
  $\varepsilon>0$. Then there exist smooth vector fields
  $V_1,\dots,V_d$ that are uniformly $\varepsilon$-close to the given
  vector fields on $\Omega$, in the sense that
  \[
    \sup_{x\in\Omega}\| V_i(x) - \widehat V_i(x)\| < \varepsilon,
    \quad i=1,\ldots,d,
  \]
  and such that \eqref{CODE_2} \& \eqref{res_readout} form a universal
  interpolator for generic training data in the sense of
  Theorem~\ref{T2}.
\end{corollary}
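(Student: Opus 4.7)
The plan is to invoke Theorem~\ref{T2} after first replacing the given smooth vector fields by nearby polynomial ones, and then perturbing the polynomials slightly within a parametric family to which Theorem~\ref{T2} applies.

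First I would use Stone--Weierstrass componentwise on the compact set $\text{cl}(\Omega)$ to obtain, for each $i$, a polynomial vector field $\widetilde V_i$ on $\R^m$ with $\sup_{x\in\Omega}\|\widetilde V_i(x)-\widehat V_i(x)\|<\varepsilon/2$. Let $k\ge 2$ be a common upper bound on their degrees, and set up the parametric family of Example~\ref{E_T2}: take $l=dm\binom{m+k}{m}$ and, for $z\in\R^l$, let $V_i(\fdot,z)$ be the polynomial vector field of degree at most $k$ whose coefficients form the $i$-th block of $z$. Let $\widetilde z\in\R^l$ be the coefficient vector corresponding to $(\widetilde V_1,\ldots,\widetilde V_d)$. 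Since polynomial evaluation depends continuously on coefficients, uniformly on the compact set $\text{cl}(\Omega)$, there is $\delta>0$ such that $\|z-\widetilde z\|<\delta$ implies $\sup_{x\in\Omega}\|V_i(x,z)-\widetilde V_i(x)\|<\varepsilon/2$ for all $i$.

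Next I would verify the two hypotheses of Theorem~\ref{T2}. Condition~(i) is automatic once I choose the distribution of $Z$ to have any probability density on $\R^l$ supported inside the ball $B(\widetilde z,\delta)$ (for instance a uniform or a truncated Gaussian). Condition~(ii) demands the existence of some $\widehat z\in\R^l$---not necessarily in the support of the density---for which $\mathrm{Lie}(V_1(\fdot,\widehat z),\ldots,V_d(\fdot,\widehat z))$ contains all polynomial vector fields. I would take the first five blocks of $\widehat z$ to be the coefficients of the five vector fields from Proposition~\ref{P_five_Vs} (all of degree at most $2\le k$), with the remaining $d-5$ blocks set to zero; Proposition~\ref{P_five_Vs} then yields condition~(ii) directly.

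Theorem~\ref{T2} now guarantees that with probability one the sampled $V_i(\fdot,Z)$ form a universal interpolator for generic training data via the readout \eqref{res_readout}. In particular, the set of such good parameters has full measure in $B(\widetilde z,\delta)$ and is therefore nonempty, so I can fix a realization $z\in B(\widetilde z,\delta)$ and define $V_i:=V_i(\fdot,z)$. The triangle inequality then gives $\sup_{x\in\Omega}\|V_i-\widehat V_i\|<\varepsilon/2+\varepsilon/2=\varepsilon$, and multiplying by a compactly supported smooth bump function that equals one on $\Omega$ preserves these values on $\Omega$ while producing globally smooth, bounded vector fields of the form required by Theorem~\ref{T2}. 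The only subtlety---and it is truly minor---is to align the degree bound $k$ with the quadratic generators of Proposition~\ref{P_five_Vs}, which is why I demand $k\ge 2$; no other real obstacle arises.
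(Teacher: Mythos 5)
Your proof is correct and follows essentially the same route as the paper's: approximate the given vector fields by polynomials (Stone--Weierstrass), parameterize polynomial vector fields of bounded degree as in Example~\ref{E_T2}, concentrate a probability density on an open neighborhood of the approximating coefficient vector (you use a metric ball, the paper uses the open set $\Theta$ of $\varepsilon/2$-close coefficients), verify condition~\ref{T2_2} via Proposition~\ref{P_five_Vs}, and invoke Theorem~\ref{T2}. Your extra remarks about the bump function and the alignment $k\ge2$ are implicit in the paper's setup for Theorem~\ref{T2} and Example~\ref{E_T2}, so nothing substantive differs.
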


\begin{proof}
  By polynomial approximation, there exist polynomial vector fields
  $W_1,\ldots,W_d$ with
  $\sup_{x\in\Omega}\| W_i(x) - \widehat V_i(x)\|<\varepsilon/2$ for
  all $i$. Let $k$ be the largest degree among the $W_i$, but no
  smaller than $2$. Parameterize all polynomial vector fields of
  degree at most $k$ by a coefficient vector $z\in\R^l$ as in
  Example~\ref{E_T2}.  Let $\Theta\subset\R^l$ be the set of all
  coefficients corresponding to polynomial vector fields
  $V_1,\ldots,V_d$ with
  $\sup_{x\in\Omega}\| W_i(x) - V_i(x)\|<\varepsilon/2$ for all
  $i$. Then $\Theta$ is an open set, so we can find a probability
  density concentrated on $\Theta$. Thanks to Theorem~\ref{T2} and
  Example~\ref{E_T2}, by drawing a coefficient vector $Z$ from this
  density we get, with probability one, vector fields $V_1,\ldots,V_d$
  with the required properties.
\end{proof}

Our second corollary establishes a randomly chosen set of neural
network type vector fields that satisfy the universal interpolator
property.

\begin{corollary}\label{C2}
  Fix $m\ge2$ and a bounded open connected subset
  $\Omega\subset\R^m$. Consider $d=7$ vector fields of the form
  \[
    V_i(x) = \sigma_i(C_i x + b_i ), \quad i=1,\ldots,7,
  \]
  where each $C_i$ is a random matrix in $\R^{m\times m}$, $b_i$ a
  random vector in $\R^m$, and $\sigma_i(\fdot)$ a real analytic
  nonlinearity acting componentwise, parameterized by some random
  vector $Z_0$ in a real analytic manner.  Assume that for some value
  $\widehat z_0$ of $Z_0$, we have $\sigma_i(r)=r$ for $i=1,2,3$, and
  $\sigma_i(r)=r^2$ for $i=4,5,6,7$. Assume also that the random
  elements $Z_0,C_1,\ldots,C_7,b_1,\ldots,b_7$ admit a joint
  density. Then with probability one, \eqref{CODE_2} \&
  \eqref{res_readout} form a universal interpolator for generic
  training data in the sense of Theorem~\ref{T2}.
\end{corollary}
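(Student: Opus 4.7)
The plan is to invoke Theorem~\ref{T2} with the concatenated random parameter $Z = (Z_0, C_1, \ldots, C_7, b_1, \ldots, b_7)$ and the vector fields $V_i(x; z) = \sigma_i(C_i x + b_i; z_0)$, where $\sigma_i$ acts componentwise. Real analyticity of each scalar component $V_i^j(x, z) = \sigma_i((C_i x + b_i)^j; z_0)$ on a neighborhood of $\text{cl}(\Omega) \times \R^l$ is inherited from the assumed real analytic dependence of $\sigma_i$ on $(r, z_0)$ composed with the polynomial dependence of $C_i x + b_i$ on $(x, C_i, b_i)$. Condition~\ref{T2_1} of Theorem~\ref{T2} follows immediately from the joint density hypothesis, so the only task is to verify condition~\ref{T2_2}: to exhibit one distinguished value $\hat z$ for which the resulting seven vector fields generate a Lie algebra containing every polynomial vector field on $\R^m$.

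\textbf{Construction of $\hat z$.} By assumption I can fix $\hat z_0$ at which $\sigma_i(r) = r$ for $i = 1, 2, 3$ and $\sigma_i(r) = r^2$ for $i = 4, 5, 6, 7$. Let $A, B$ be the traceless $m \times m$ matrices furnished by Proposition~\ref{P_five_Vs}, and let $\mathbf{1} = (1, \ldots, 1)^\top \in \R^m$ be the all-ones vector. Set
\[
\hat C_1 = A,\ \hat C_2 = B,\ \hat C_3 = 0,\ \hat b_3 = e_m,\ \hat C_4 = e_1 e_m^\top,\ \hat C_5 = \mathbf{1} e_m^\top,\ \hat C_6 = I,\ \hat C_7 = I + \mathbf{1} e_m^\top,
\]
with all remaining $\hat b_i$ equal to zero. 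The three linear fields $\hat V_1 = A x$, $\hat V_2 = B x$, $\hat V_3 = e_m$, together with $\hat V_4 = (x^m)^2 e_1$, reproduce the vector fields $V_1, \ldots, V_4$ of Proposition~\ref{P_five_Vs} verbatim. The remaining three quadratic fields have $j$-th components
\[
\hat V_5^j = (x^m)^2, \qquad \hat V_6^j = (x^j)^2, \qquad \hat V_7^j = (x^j + x^m)^2.
\]

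\textbf{Polarization and conclusion.} The polarization identity $(a + b)^2 - a^2 - b^2 = 2ab$ applied componentwise gives
\[
\tfrac{1}{2} \bigl( \hat V_7 - \hat V_6 - \hat V_5 \bigr)^j = x^j x^m,
\]
so this linear combination coincides with the vector field $V_5$ of Proposition~\ref{P_five_Vs}. Because the Lie algebra is closed under linear combinations, ${\rm Lie}(\hat V_1, \ldots, \hat V_7)$ contains $V_1, \ldots, V_5$ of Proposition~\ref{P_five_Vs}, and hence (by that proposition) every polynomial vector field. Condition~\ref{T2_2} is thus verified, and Theorem~\ref{T2} yields the corollary.

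\textbf{Main obstacle.} The principal subtlety is exactly this componentwise constraint: because each entry of $\hat V_i$ with $i \geq 4$ must be a perfect square of an affine function of $x$, the cross terms $x^j x^m$ appearing in $V_5$ of Proposition~\ref{P_five_Vs} cannot be realized by any single $\hat V_i$. Polarization circumvents this, but requires three quadratic fields to extract the cross terms plus one more to supply $\hat V_4$, which is exactly why the corollary is formulated with $d = 7$ rather than $d = 5$.
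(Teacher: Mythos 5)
Your proof is correct and follows essentially the same route as the paper: set the random parameter to a distinguished value realizing $V_1,\ldots,V_4$ of Proposition~\ref{P_five_Vs} directly, and recover $V_5$ by polarization of three componentwise-square quadratic fields. The only difference is a cosmetic permutation of the labels on the fifth, sixth, and seventh vector fields (your $\hat V_5$, $\hat V_7$ are the paper's $\widehat V_7$, $\widehat V_5$), which does not affect the argument.
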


\begin{proof}
  To apply Theorem~\ref{T2}, first observe that the vector fields
  $V_1,\ldots,V_7$ are jointly real analytic in $x$ and in the random
  vector $Z$ consisting of $Z_0,C_1,\ldots,C_7,b_1,\ldots,b_7$.  This
  admits a density by assumption, so condition~\ref{T2_1} of the
  theorem is satisfied. It only remains to verify
  condition~\ref{T2_2}. Define the vector fields
  $\widehat V_1(x) = Ax$ and $\widehat V_2(x) = Bx$, where $A$ and $B$
  are the traceless $m\times m$ matrices from
  Proposition~\ref{P_five_Vs}. Define also the vector fields
  \[
    \widehat V_3(x) = \begin{pmatrix}0 \\ \vdots \\ 0 \\
      1\end{pmatrix}, \quad \widehat V_4(x) = \begin{pmatrix}(x^m)^2
      \\ 0 \\ \vdots \\ 0\end{pmatrix},
    \quad \widehat V_5(x) = \begin{pmatrix}(x^1 + x^m)^2 \\ (x^2 + x^m)^2 \\ \vdots \\
      (x^m + x^m)^2 \end{pmatrix}
  \]
  \[
    \widehat V_6(x) = \begin{pmatrix}(x^1)^2 \\ (x^2)^2 \\ \vdots \\
      (x^m)^2 \end{pmatrix} , \quad
    \widehat V_7(x) = \begin{pmatrix}(x^m)^2 \\ (x^m)^2 \\ \vdots \\
      (x^m)^2 \end{pmatrix}.
  \]
  Then the five vector fields $\widehat V_1,\ldots,\widehat V_4$, and
  $\frac{1}{2}(\widehat V_5 - \widehat V_6 - \widehat V_7)$ are
  exactly the ones from Proposition~\ref{P_five_Vs}. The Lie algebra
  they generate, and therefore also the Lie algebra generated by
  $\widehat V_1,\ldots,\widehat V_7$, contains all polynomial vector
  fields.  Let now $\widehat z$ be the value of $Z$ for which
  $Z_0=\widehat z_0$, $b_1=b_2=b_4=b_5=b_6=b_7=0$, $b_3=e_m$ (the
  $m$th canonical basis vector), $C_1=A$, $C_2=B$, $C_3=0$,
  \[
    C_4 = \begin{pmatrix}0 & \cdots & 0 & 1\\ 0 & \cdots &0&0\\\vdots
      &&\vdots& \vdots\\0 &\cdots&0& 0\end{pmatrix},\quad C_7
    = \begin{pmatrix}0 & \cdots & 0 & 1\\ 0 & \cdots &0&1\\\vdots
      &&\vdots& \vdots\\0 &\cdots&0& 1\end{pmatrix},
  \]
  $C_6=I$ (the $m\times m$ identity matrix), and $C_5=C_6+C_7$. For
  this value $\widehat z$ of $Z$, the vector fields $V_1,\ldots,V_7$
  coincide with $\widehat V_1,\ldots,\widehat
  V_7$. Condition~\ref{T2_2} of Theorem~\ref{T2} is therefore
  satisfied, and the proof is complete.
\end{proof}

\begin{example}
  We illustrate Corollary~\ref{C2} with one concrete example. Let all
  the entries of the matrices $C_i$ and vectors $b_i$ be standard
  normal.  Choose a fixed real analytic nonlinearity $\sigma(\fdot)$,
  for example $\sigma(r)=\arctan(r)$ or $\sigma(r)=\tanh(r)$. Define
  \[
    \sigma_i(r) = Z_0^1 r + (1-Z_0^1) \sigma(r)
  \]
  for $ i =1, 2, 3$, and
  \[
    \sigma_i(r) = Z_0^2 r^2 + (1 - Z_0^2) \sigma(r)
  \]
  for $ i = 4,5,6,7 $, with the two components of $Z_0=(Z_0^1,Z_0^2)$
  standard normal. All random variables are taken mutually
  independent. The hypotheses of the corollary are then satisfied with
  $\widehat z_0=(1,1)$.
\end{example}

\section{Proof of Theorem~\ref{T2}}\label{S_proof_T2}

We focus on the case $d=5$. The result for larger values of $d$ then
follows by restricting to controls in the CODE \eqref{CODE_2} with
$u^6_t=\ldots=u^d_t=0$. (Of course, more than five vector fields could
still be important to achieve better results in practice.)

Consider therefore vector fields $V_1(\fdot,z),\ldots,V_5(\fdot,z)$ on
$\R^m$, parameterized by a parameter $z\in\R^l$, such that the map
$(x,z)\mapsto V_i^j(x,z)$ is real analytic in a neighborhood of
$\text{cl}(\Omega)\times\R^l$ for all $i=1,\ldots,d$ and
$j=1,\ldots,m$. Recall from condition~\ref{T2_2} of the theorem that
$\widehat V_i=V_i(\fdot,\widehat z)$ denote the vector fields obtained
by taking $z=\widehat z$ which, by assumption, has the property that
${\rm Lie}(\widehat V_1,\ldots,\widehat V_5)$ contains all polynomial
vector fields.

The following lemma is the technical core of the proof of
Theorem~\ref{T2}. It uses the notion of \emph{interpolating at a
  tuple}, introduced in Definition~\ref{D_inter_tuple}.

\begin{lemma}\label{L_pfT2}
  Fix any $N\in\N$. There exists a Lebesgue nullset
  $\Mcal_N\subset \R^l $ with the following property: for every
  $z\in\R^l \setminus \Mcal_N$, there exists a Lebesgue nullset
  $\Ncal_N\subset\Omega^N$ (depending on $z$) such that
  ${\rm Lie}(V_1,\ldots,V_5)$ interpolates at every tuple
  $\bar x=(x_1,\ldots,x_N)\in\Omega^N\setminus\Ncal_N$.
\end{lemma}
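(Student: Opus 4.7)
The plan is to use condition~\ref{T2_2} at $\widehat z$ to produce a fixed finite list of iterated bracket expressions that witness interpolation at a single reference tuple, and then propagate that witness to generic $(\bar x,z)$ by combining joint real analyticity in $(x,z)$ with Fubini.

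First, pick any tuple $\bar x_0 \in \Omega^N$ with pairwise distinct coordinates. By condition~\ref{T2_2} and Lemma~\ref{L_poly_interp} applied at $z=\widehat z$, there exist vector fields $P_1,\ldots,P_{mN} \in {\rm Lie}(\widehat V_1,\ldots,\widehat V_5)$ whose stacked values $P_1^{\oplus N}(\bar x_0),\ldots,P_{mN}^{\oplus N}(\bar x_0)$ form a basis of $(\R^m)^N$. By definition each $P_k$ is a finite $\R$-linear combination of iterated Lie brackets of the symbols $\widehat V_1,\ldots,\widehat V_5$. Freeze this combinatorial recipe (the coefficients and the bracket patterns) and reinterpret it on $V_1(\fdot,z),\ldots,V_5(\fdot,z)$ to obtain vector fields $\tilde P_k(\fdot,z) \in {\rm Lie}(V_1(\fdot,z),\ldots,V_5(\fdot,z))$ with $\tilde P_k(\fdot,\widehat z)=P_k$. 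Since the Lie bracket is built from $x$-partial differentiation, pointwise multiplication, and linear combinations of components, each $(x,z)\mapsto \tilde P_k(x,z)$ is real analytic on a neighborhood of $\text{cl}(\Omega)\times\R^l$.

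Now consider
\[
  D(\bar x, z) \;=\; \det\bigl[\, \tilde P_1^{\oplus N}(\bar x,z) \,\big|\, \cdots \,\big|\, \tilde P_{mN}^{\oplus N}(\bar x,z) \,\bigr],
\]
which is real analytic on the open connected set $\Omega^N\times\R^l$ and satisfies $D(\bar x_0,\widehat z)\ne 0$. Hence $D$ is not identically zero, so by the standard fact that a nonzero real analytic function on a connected open set has a zero set of Lebesgue measure zero, $\{D=0\}$ is a nullset in $\Omega^N\times\R^l$. Fubini's theorem then yields a Lebesgue nullset $\Mcal_N\subset\R^l$ such that for every $z\in\R^l\setminus\Mcal_N$, the slice $\{\bar x\in\Omega^N: D(\bar x,z)=0\}$ is a nullset in $\Omega^N$; augment it by the lower-dimensional diagonal $\Delta$ to define $\Ncal_N$. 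For $z\notin\Mcal_N$ and $\bar x\in\Omega^N\setminus\Ncal_N$, the stacked vectors $\tilde P_k^{\oplus N}(\bar x,z)$ are linearly independent in $(\R^m)^N$, so any target $\bar v \in (\R^m)^N$ can be written as $\widehat V^{\oplus N}(\bar x)$ for some linear combination $\widehat V = \sum_k c_k\tilde P_k(\fdot,z) \in {\rm Lie}(V_1(\fdot,z),\ldots,V_5(\fdot,z))$, which is exactly the interpolation property of Definition~\ref{D_inter_tuple}.

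The main technical point is verifying the joint real analyticity in $(x,z)$ of the frozen bracket expressions $\tilde P_k$ on a neighborhood of $\text{cl}(\Omega)\times\R^l$, as opposed to merely in $x$ for each fixed $z$; once this is in place, the remainder is a routine combination of the real-analytic-zero-set dichotomy and Fubini.
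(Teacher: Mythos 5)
Your proof is correct and follows essentially the same route as the paper: both arguments extract a finite list of Lie polynomials in the $\widehat V_i$ witnessing interpolation, reinterpret them at general $z$, and exploit joint real analyticity in $(\bar x, z)$ together with the zero-set dichotomy for real analytic functions to conclude genericity. The only cosmetic differences are that the paper works with the sum of squared $mN\times mN$ subdeterminants $\Gamma_n$ of a larger $mN\times D_n$ matrix (a nonnegative analytic function positive at $\widehat z$ for \emph{all} distinct tuples) rather than a single determinant anchored at a reference tuple $\bar x_0$, and it identifies $\Mcal_N$ directly as the set of $z$ where the slice function vanishes identically rather than via Fubini, but these variations do not change the substance of the argument.
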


\begin{proof}
  For each $n\in\N$, let $D_n=m{m+n\choose m}$ denote the dimension of
  the space of polynomial vector fields on $\R^m$ of degree at most
  $n$.\footnote{$D_n$ is $m$ times ${m+n\choose m}$, the dimension of
    the space of polynomials of degree at most $n$ in $m$ variables.}
  Since ${\rm Lie}(\widehat V_1,\ldots,\widehat V_5)$ contains all
  polynomial vector fields, it contains in particular a sequence of
  vector fields $E_1,E_2,\ldots$ such that for each $n$,
  $\{E_1,\ldots,E_{D_n}\}$ forms a basis for the space of polynomial
  vector fields of degree at most $n$. By definition of the Lie
  algebra, each $E_j$ is of the form
  \[
    E_j(x) = L_j(\widehat V_1,\ldots,\widehat V_5)(x)
  \]
  for some Lie polynomial $L_j$ on five symbols (i.e., a linear
  combination of Lie words built from iterated brackets).

  Consider now an arbitrary $z\in\R^l$ and the corresponding vector
  fields $V_i=V_i(\fdot,z)$, $i=1,\ldots,5$. For each $n\in\N$, define
  the collection of vector fields
  \[
    \Vcal_n = \text{linear span of
      $L_1(V_1,\ldots,V_5),\ldots,L_{D_n}(V_1,\ldots,V_5)$}.
  \]
  The collection $\Vcal_n$ interpolates at a tuple
  $(x_1,\ldots,x_N)\in\Omega^N$, in the sense of
  Definition~\ref{D_inter_tuple}, if and only if the $mN \times D_n$
  matrix
  \[
    \begin{pmatrix}
      L_1(V_1,\ldots,V_5)(x_1)	& \cdots 	& L_{D_n}(V_1,\ldots,V_5)(x_1) \\
      \vdots				&		& \vdots\\
      L_1(V_1,\ldots,V_5)(x_N) & \cdots & L_{D_n}(V_1,\ldots,V_5)(x_N)
    \end{pmatrix}
  \]
  has columns that span $(\R^m)^N$. This holds if and only if at least
  one $mN\times mN$ submatrix has nonzero determinant, which in turn
  holds if and only if the nonnegative quantity
  \[
    \Gamma_n = \sum_{\substack{J\subseteq\{1,\ldots,D_n\}\\|J|=mN}}
    \det\begin{bmatrix*}\begin{pmatrix*}[l]L_j(V_1,\ldots,V_5)(x_1) \\
        \vdots \\ L_j(V_1,\ldots,V_5)(x_N)\end{pmatrix*} ,\ j\in
      J\end{bmatrix*}^2
  \]
  is strictly positive.

  Since products, sums, and derivatives of real analytic functions
  remain real analytic, we have that
  $\Gamma_n = \Gamma_n(x_1,\ldots,x_N, z)$ is jointly real analytic in
  $(x_1,\ldots,x_N,z)$ in a neighborhood of
  $(\text{cl}(\Omega))^N\times\R^l$.  Furthermore, by construction,
  the vector fields $L_j(\widehat V_1,\ldots,\widehat V_5)$,
  $j=1,\ldots,D_n$, span all polynomial vector fields of degree at
  most $n$. Therefore, in view of Lemma~\ref{L_poly_interp}, for $n$
  large enough depending on $N$, we have
  \[
    \Gamma_n(x_1,\ldots,x_N,\widehat z) > 0
  \]
  for all pairwise distinct $(x_1,\ldots,x_N)\in(\R^m)^N$. In
  particular,
  \[
    (x_1,\ldots,x_N,z) \mapsto \Gamma_n(x_1,\ldots,x_N,z)
  \]
  is not identically zero and thus, being a nonnegative real analytic
  function, is strictly positive almost everywhere.  Therefore, there
  is a Lebesgue nullset $\Mcal_N\subset \R^l $ such that whenever
  $z\in \R^l \setminus \Mcal_N$, the real analytic function
  \[
    (x_1,\ldots,x_N) \mapsto \Gamma_n(x_1,\ldots,x_N,z)
  \]
  is not identically zero. Its zero set,
  \[
    \Ncal_N =
    \{(x_1,\ldots,x_N)\in\Omega^N\colon\Gamma_n(x_1,\ldots,x_N,z)=0\},
  \]
  is then a Lebesgue nullset. (Note that $\Ncal_N$ depends on the
  choice of $z$.)  Since ${\rm Lie}(V_1,\ldots,V_5)$ contains
  $\{L_j(V_1,\ldots,V_5)\colon j=1,\ldots,D_n\}$, and hence contains
  $\Vcal_n$ as well, it interpolates at every tuple
  $\bar x=(x_1,\ldots,x_N)\in\Omega^N\setminus\Ncal_N$. The lemma is
  proved.
\end{proof}

We can now prove Theorem~\ref{T2}. Let $\Mcal_N\subset \R^l $ for
$N\in\N$ be the nullsets given in Lemma~\ref{L_pfT2}. Define
\[
  \Mcal = \bigcup_{N=1}^\infty\Mcal_N,
\]
which is still a nullset.  Assume now that $V_i=V_i(\fdot,Z)$,
$i=1,\ldots,d$, are chosen randomly as described in the theorem. Then,
since the law of $Z$ has a density, $Z\in\R^l \setminus\Mcal$ with
probability one.  Fix any $N\in\N$ and let $\Ncal_N\subset\Omega^N$ be
the nullset whose existence is guaranteed by
Lemma~\ref{L_pfT2}. Choose
$\{(x_i,y_i)\in\Omega\times\Omega\colon i=1,\ldots,N\}$ as described
in the theorem. Then $\bar x=(x_1,\ldots,x_N)$ lies in
$\Omega^N\setminus\Ncal_N$ with probability one, so that
${\rm Lie}(V_1,\ldots,V_5)$ interpolates at $\bar
x$. Lemma~\ref{L_interp_Horm} now implies that the H\"ormander
condition holds at $\bar x$:
\[ {\rm Lie}(V_1^{\oplus N},\ldots,V_d^{\oplus N})(\bar x) = (\R^m)^N.
\]
By continuity, there is an open connected neighborhood
$\Ucal\subset\Omega^N$ of $\bar x$ such that the H\"ormander condition
holds everywhere in $\Ucal$. Moreover, since $\Ucal$ is open, it is
possible to choose $\lambda>0$ large enough that
$\bar x+\lambda^{-1}\bar y\in\Ucal$, where $\bar
y=(y_1,\ldots,y_N)$. We can then apply the Chow--Rashevskii theorem in
$\Ucal$ to get controls $u^1_t,\ldots,u^5_t$ that achieve
$x_i+\lambda^{-1}y_i=X^{x_i}_1$ for $i=1,\ldots,N$. This completes the
proof of the theorem.

\begin{remark}
  We conjecture that $d=2$ vector fields would actually be sufficient
  for the conclusion of Theorem \ref{T2}.  Notice also how the
  re-scaling trick of introducing an additional parameter $\lambda$
  localizes the problem. This circumvents potentially very difficult
  questions about the global structure of the zero sets $\Ncal_N$,
  that may prevent us from applying the Chow--Rashevskii theorem
  globally.
\end{remark}

\appendix

\section{Generators for the polynomial vector fields}

In this appendix we verify Claim~3 in the proof of
Proposition~\ref{P_five_Vs}. To avoid confusion with powers, we here
use subscripts to denote the components of the vector
$x=(x_1,\ldots,x_m)$. Moreover, to make computations more transparent
we canonically identify any vector field
$V(x)=f_1(x)e_1+\cdots+f_m(x)e_m$ on $\R^m$ with the differential
operator $f_1(x)\partial_1+\cdots+f_m(x)\partial_m$, which we again
denote by $V$. Here $\partial_i=\frac{\partial}{\partial x_i}$ denotes
partial derivative with respect to $x_i$. The action of $V$ on a
smooth scalar function $g$ is
$Vg=f_1\partial_1g+\cdots+f_m\partial_mg$. The Lie bracket of two
vector fields $f\partial_i$ and $g\partial_j$ is
$[f\partial_i,g\partial_j]=f\partial_ig - g\partial_jf$.\footnote{In
  particular, this gives the formula $[U,V]g = U(Vg) - V(Ug)$ for
  every smooth function $g$, showing that the Lie bracket of vector
  fields coincides with the linear commutator of the associated
  differential operators.}

We now proceed with the proof. Let $\Lcal$ be the Lie algebra
generated by the vector fields
\[
  \partial_m,\quad x_m^2\partial_1, \quad x_m\sum_{i=1}^m
  x_i\partial_i, \quad x_i\partial_j \quad (i,j=1,\ldots,m).
\]
We must show that $\Lcal$ contains all polynomial vector fields.

Let us first show that $\Lcal$ contains all polynomial vector fields
of degree at most two. All linear vector fields lie in $\Lcal$ by
assumption. Furthermore, all constant vector fields lie in $\Lcal$
because $\partial_m\in\Lcal$ by assumption, and
$\partial_i=[\partial_m,x_m\partial_i]\in\Lcal$ for $i=1,\ldots,m-1$.

We now turn to the quadratic vector fields, and start by considering
the following identities. For $i\in\{1,\ldots,m-1\}$, we compute
\begin{equation}\label{A_calc_1}
  \begin{aligned}
    2x_{m-1}x_m\partial_i &= [x_{m-1}\partial_m,x_m^2\partial_i] \\
    x_{m-2}x_m\partial_i &= [x_{m-2}\partial_{m-1},x_{m-1}x_m\partial_i] \\
    &\vdots \\
    x_{i+1}x_m\partial_i &=
    [x_{i+1}\partial_{i+2},x_{i+2}x_m\partial_i],
  \end{aligned}
\end{equation}
where the last line is only included if $i\le m-2$. For
$i\in\{2,\ldots,m\}$ we compute
\begin{align*}
  2x_1x_m\partial_i &= [x_1\partial_m,x_m^2\partial_i] \\
  x_2x_m\partial_i &= [x_2\partial_1,x_1x_m\partial_i] \\
                    &\vdots \\
  x_{i-1}x_m\partial_i &= [x_{i-1}\partial_{i-2},x_{i-2}x_m\partial_i].
\end{align*}
Moreover, we have $x_m^2\partial_i=[x_m^2\partial_1,x_1\partial_i]$
for $i=1,\ldots,m-1$. From these computations we deduce that $\Lcal$
contains all vector fields of the form $f(x)\partial_i$, where
$i\in\{1,\ldots,m-1\}$ and $f(x)$ ranges across the monomials listed
in the following matrix:
\begin{equation}\label{matrix_1}
  \begin{pmatrix}
    x_1^2	&x_1x_2	&\cdots	&x_1x_{i-1}	& 0	& x_1x_{i+1}	& \cdots	& x_1x_m \\
    &x_2^2	&\cdots	&x_2x_{i-1}	& 0	& x_2x_{i+1}	& \cdots	& x_2x_m \\
    &		&\ddots	&\vdots		&	&\vdots		&		& \vdots \\
    &		&		&x_{i-1}^2		& 0	&x_{i-1}x_{i+1}	& \cdots	& x_{i-1}x_m \\
    &		&		&			& 0	&0			& \cdots	& 0\\
    &		&		&			& 	&x_{i+1}^2	& \cdots	& x_{i+1}x_m \\
    &		&		&			&	&			& \ddots	& \vdots \\
    &		&		&			& 	&			& 		& x_m^2 \\
  \end{pmatrix}
\end{equation}
We now extend this to $i=m$. A calculation shows that
\[
  -x_m^2\partial_m = [x_m^2\partial_1,x_1\partial_m] + 2
  \sum_{i=1}^{m-1}[x_i\partial_{i+1},x_{i+1}x_m\partial_i],
\]
which therefore lies in $\Lcal$. Repeating \eqref{A_calc_1}, this time
with $i=m$, gives
\begin{align*}
  2x_{m-1}x_m\partial_m &= [x_{m-1}\partial_m,x_m^2\partial_m] \\
  x_{m-2}x_m\partial_m &= [x_{m-2}\partial_{m-1},x_{m-1}x_m\partial_m] \\
                        &\vdots \\
  x_1x_m\partial_m &= [x_1\partial_2,x_2x_m\partial_m].
\end{align*}
Moreover, we have $x_jx_k\partial_m=[x_j\partial_m,x_kx_m\partial_m]$
for $j,k<m$. From this we deduce that $\Lcal$ additionally contains
all vector fields of the form $f(x)\partial_m$, where $f(x)$ ranges
across the monomials listed in \eqref{matrix_1} with $i=m$.

There are still monomials missing in \eqref{matrix_1}. Consider first
the case $i=m$. We have
$2x_jx_m\partial_m=[x_j\partial_m,x_m^2\partial_m]$ for $j<m$, and
$x_m^2\partial_m\in\Lcal$ by assumption. This confirms that
$f(x)\partial_m\in\Lcal$ whenever $f(x)$ is a monomial of degree
two. Consider instead the case $i<m$. We compute
\begin{align*}
  x_i\sum_{j=1}^mx_j\partial_j &= [x_i\partial_m, x_m\sum_{j=1}^m x_j\partial_j] + x_ix_m\partial_m \\
  -x_ix_m^2\partial_m &= [x_m^2\partial_m, x_i\sum_{j=1}^mx_j\partial_j] \\
  x_ix_m\partial_i &= x_m^2\partial_m + \frac12 [[\partial_m,x_ix_m^2\partial_m],x_m\partial_i].
\end{align*}
This implies that $x_ix_m\partial_i\in\Lcal$ for all
$i<m$. Furthermore, for all $i\ne j$ we have
\begin{align*}
  x_i^2\partial_i &= [x_i\partial_m, x_ix_m\partial_i] + x_ix_m\partial_m \\
  2x_ix_j\partial_i &= [x_j\partial_i,x_i^2\partial_i].
\end{align*}
This confirms that $f(x)\partial_i\in\Lcal$ whenever $f(x)$ is a
monomial of degree two and $i\in\{1,\ldots,m-1\}$. In summary, we have
shown that $\Lcal$ contains all polynomial vector fields of degree at
most two.

It remains to prove that $\Lcal$ contains all higher-degree polynomial
vector fields as well. This follows by induction from the following
claim; note that we have already established the base case $k=2$.

\underline{Claim:} Let $k\ge2$ and assume $\Lcal$ contains all
$x^{\bm\alpha}\partial_i$ with $|\bm\alpha|\le k$. Then $\Lcal$ also
contains all $x^{\bm\alpha}\partial_i$ with $|\bm\alpha|=k+1$.

To prove the claim, pick $\bm\alpha$ with $|\bm\alpha|=k+1$. We prove
that $\Lcal$ contains $x^{\bm\alpha}\partial_1$; the vector fields
$x^{\bm\alpha}\partial_i$ with $i=2,\ldots,m$ are treated in the same
way. There are three cases. First, if $\alpha_1=0$, then
$\alpha_i\ge1$ for some $i\ge2$. Thus
$2x^{\bm\alpha}\partial_1=[x^{\alpha-e_1}\partial_i,x_i^2\partial_1]\in\Lcal$. Second,
if $\alpha_1\ge1$ and $\alpha_1\ne3$, then
$(3-\alpha_1)x^{\bm\alpha}\partial_1=[x^{\alpha-e_1}\partial_1,x_1^2\partial_1]$,
so that $x^{\bm\alpha}\partial_1\in\Lcal$. Third, if $\alpha_1=3$, we
have $x^{\bm\alpha}=x_1^3 x^{\bm\beta}$ with
$\bm\beta=(0,\alpha_2,\ldots,\alpha_m)$. Then
$2x^{\bm\alpha}\partial_1=[x_1x^{\bm\beta}\partial_1,[x_1^2\partial_2,x_1x_2\partial_1]
+ 2[x_1^2\partial_1,x_1x_2\partial_2]]\in\Lcal$. This completes the
proof of the claim, and shows that $\Lcal$ contains all polynomial
vector fields.

\bibliographystyle{abbrvnat} \bibliography{bibl}

\begin{thebibliography}{21}
\providecommand{\natexlab}[1]{#1}
\providecommand{\url}[1]{\texttt{#1}}
\expandafter\ifx\csname urlstyle\endcsname\relax
  \providecommand{\doi}[1]{doi: #1}\else
  \providecommand{\doi}{doi: \begingroup \urlstyle{rm}\Url}\fi

\bibitem[Belkin et~al.(2018)Belkin, Hsu, and Mitra]{Belkin2018OverfittingOP}
M.~Belkin, D.~J. Hsu, and P.~Mitra.
\newblock Overfitting or perfect fitting? risk bounds for classification and
  regression rules that interpolate.
\newblock In \emph{NeurIPS}, 2018.

\bibitem[Chang et~al.(2017)Chang, Meng, Haber, Tung, and Begert]{chang_etal_17}
B.~Chang, L.~Meng, E.~Haber, F.~Tung, and D.~Begert.
\newblock Multi-level residual networks from dynamical systems view.
\newblock \emph{arXiv:1710.10348}, 2017.

\bibitem[Chen et~al.(2018)Chen, Rubanova, Bettencourt, and
  Duvenaud]{che_etal_18}
T.~Q. Chen, Y.~Rubanova, J.~Bettencourt, and D.~K. Duvenaud.
\newblock Neural ordinary differential equations.
\newblock In \emph{Advances in Neural Information Processing Systems}, pages
  6571--6583, 2018.

\bibitem[Cuchiero et~al.(2019)Cuchiero, Larsson, and Teichmann]{cuclartei:19}
C.~Cuchiero, M.~Larsson, and J.~Teichmann.
\newblock Controlled differential equations on convenient spaces.
\newblock \emph{Working paper}, 2019.

\bibitem[Cybenko(1989)]{cyb_89}
G.~Cybenko.
\newblock Approximation by superpositions of a sigmoidal function.
\newblock \emph{Math. Control Signals Systems}, 2\penalty0 (4):\penalty0
  303--314, 1989.

\bibitem[Dupont et~al.(2019{\natexlab{a}})Dupont, Doucet, and Teh]{DDT:2019}
E.~Dupont, A.~Doucet, and Y.~W. Teh.
\newblock Augmented neural {ODE}s.
\newblock \emph{https://arxiv.org/abs/1904.01681}, 2019{\natexlab{a}}.

\bibitem[Dupont et~al.(2019{\natexlab{b}})Dupont, Doucet, and
  Teh]{dup_dou_teh_19}
E.~Dupont, A.~Doucet, and Y.~W. Teh.
\newblock Augmented neural {ODE}s.
\newblock \emph{ArXiv:1904.01681}, 2019{\natexlab{b}}.

\bibitem[E(2017)]{e_17}
W.~E.
\newblock A proposal on machine learning via dynamical systems.
\newblock \emph{Commun. Math. Stat.}, 5\penalty0 (1):\penalty0 1--11, 2017.

\bibitem[E et~al.(2018)E, Han, and Li]{e_han_li_18}
W.~E, J.~Han, and Q.~Li.
\newblock A mean-field optimal control formulation of deep learning.
\newblock \emph{Research in the Mathematical Sciences}, 6\penalty0
  (1):\penalty0 10, Dec 2018.

\bibitem[Grathwohl et~al.(2018)Grathwohl, Chen, Bettencourt, Sutekever, and
  Duvenaud]{gra_che_bet_sut_duv_18}
W.~Grathwohl, T.~Q. Chen, J.~Bettencourt, I.~Sutekever, and D.~K. Duvenaud.
\newblock {FFJORD}: {F}ree-form continuous dynamics for scalable reversible
  generative models.
\newblock \emph{arXiv:1810.01367}, 2018.

\bibitem[He et~al.(2015)He, Zhang, Ren, and Sun]{he_zhang_ren_sun_15}
K.~He, X.~Zhang, S.~Ren, and J.~Sun.
\newblock Deep residual learning for image recognition.
\newblock \emph{2016 IEEE Conference on Computer Vision and Pattern Recognition
  (CVPR)}, pages 770--778, 2015.

\bibitem[Hornik(1991)]{hor_91}
K.~Hornik.
\newblock Approximation capabilities of multilayer feedforward networks.
\newblock \emph{Neural Networks}, 4\penalty0 (2):\penalty0 251--257, 1991.

\bibitem[Kuranishi(1951)]{kur_51}
M.~Kuranishi.
\newblock On everywhere dense imbedding of free groups in {L}ie groups.
\newblock \emph{Nagoya Math. J.}, 2:\penalty0 63--71, 1951.

\bibitem[Leites and Poletaeva(1997)]{lei_pol_97}
D.~Leites and E.~Poletaeva.
\newblock Defining relations for classical {L}ie algebras of polynomial vector
  fields.
\newblock \emph{Math. Scand.}, 81\penalty0 (1):\penalty0 5--19 (1998), 1997.

\bibitem[Liang and Rakhlin(2018)]{liang2018just}
T.~Liang and A.~Rakhlin.
\newblock Just interpolate: Kernel" ridgeless" regression can generalize.
\newblock \emph{arXiv preprint arXiv:1808.00387}, 2018.

\bibitem[Liu and Markowich(2019)]{liu_mar_19}
H.~Liu and P.~Markowich.
\newblock Selection dynamics for deep neural networks.
\newblock \emph{arXiv:1905.09076}, 2019.

\bibitem[Ma et~al.(2018)Ma, Bassily, and Belkin]{Ma2018ThePO}
S.~Ma, R.~Bassily, and M.~Belkin.
\newblock The power of interpolation: Understanding the effectiveness of sgd in
  modern over-parametrized learning.
\newblock In \emph{ICML}, 2018.

\bibitem[Magnus et~al.(1976)Magnus, Karrass, and Solitar]{mag_kar_sol_76}
W.~Magnus, A.~Karrass, and D.~Solitar.
\newblock \emph{Combinatorial group theory}.
\newblock Dover Publications, Inc., New York, revised edition, 1976.

\bibitem[Montgomery(2002)]{mon_02}
R.~Montgomery.
\newblock \emph{A tour of subriemannian geometries, their geodesics and
  applications}, volume~91 of \emph{Mathematical Surveys and Monographs}.
\newblock American Mathematical Society, Providence, RI, 2002.

\bibitem[Pinkus(1999)]{pinkus_99}
A.~Pinkus.
\newblock Approximation theory of the {MLP} model in neural networks.
\newblock \emph{Acta numerica}, 8:\penalty0 143--195, 1999.

\bibitem[Zhang et~al.(2019)Zhang, Gao, Unterman, and Arodz]{ZGUA:2019}
H.~Zhang, X.~Gao, J.~Unterman, and T.~Arodz.
\newblock Approximation capabilities of neural ordinary differential equations.
\newblock \emph{https://arxiv.org/abs/1907.12998}, 07 2019.

\end{thebibliography}
\end{document}